\newtheorem{theorem}{Theorem}[section]
\newtheorem{proposition}[theorem]{Proposition}
\newtheorem{lemma}[theorem]{Lemma}
\newtheorem{corollary}[theorem]{Corollary}
\theoremstyle{remark}
\newtheorem*{remark}{Remark}
\newcommand{\remove}[1]{ }
\def\RR{\mathbb R}
\def\eps{\varepsilon }
\def\gam{\gamma }
\newcommand{\set}[1]{\left\lbrace #1\right\rbrace}
\providecommand{\abs}[1]{\left\lvert#1\right\rvert}
\providecommand{\norm}[1]{\left\lVert#1\right\rVert}
\numberwithin{equation}{section}
\begin{document}

\title[Stabilization of the Gear--Grimshaw system]{Stabilization of the Gear--Grimshaw system on a periodic domain}
\author[Capistrano-Filho]{R. A. Capistrano Filho}
\address{Instituto de Matemática,
Universidade Federal do Rio de Janeiro,
C.P. 68530 - Cidade Universitária - Ilha do Fundão,
21945-970 Rio de Janeiro (RJ),
Brazil}
\email{capistrano@im.ufrj.br}
\author[Komornik]{V. Komornik}
\address{Département de Mathématique,
        Université de Strasbourg,
        7 rue René Descartes,
	67084 Strasbourg Cedex,
	France}
\email{vilmos.komornik@math.unistra.fr}
\author[Pazoto]{A. F. Pazoto}
\address{Instituto de Matemática,
Universidade Federal do Rio de Janeiro,
C.P. 68530 - Cidade Universitária - Ilha do Fundão,
21945-970 Rio de Janeiro (RJ),
Brazil}
\email{ademir@im.ufrj.br}
\subjclass[2000]{Primary: 35Q53, Secondary: 37K10, 93B05, 93D15}
\keywords{KdV equation. conservation laws, stabilization, decay rate}
\date{Version 2013-09-07-a}
\remove{In (1.5) and in the proof of Lemma 3.1 the terms $Mu_x$ and $Nv_x$ were missing where $M=[u]$ and $N=[v]$. The proof was correct in the special case $[\varphi]=[\psi]=0$ but the general case required a small modification.}

\begin{abstract}
This paper is devoted to the study of a nonlinear coupled system of two Korteweg–de Vries equations in a periodic domain
under the effect of an internal damping term. The system was introduced Gear and Grimshaw to model the interactions
of two-dimensional, long, internal gravity waves propagation in a stratified fluid. Designing a time-varying feedback law
and using a Lyapunov approach we establish the exponential stability of the solutions in Sobolev spaces of any positive integral order.
\end{abstract}

\maketitle

\section{Introduction}
\label{s1}

The goal of this paper is to investigate the decay properties of the initial-value problem
\begin{equation}\label{11}
\begin{cases}
u'+uu_x+u_{xxx}+a_3v_{xxx}+a_1vv_x+a_2(uv)_x+k(u-[u])=0,\\
b_1v'+rv_x+vv_x+v_{xxx}+b_2a_3u_{xxx}+b_2a_2uu_x\\
\hspace{5cm} +b_2a_1(uv)_x+k(v-[v])=0,\\
u(0,x)=\phi(x),\\
v(0,x)=\psi(x)
\end{cases}
\end{equation}
with periodic boundary conditions. In \eqref{11}, $r, a_1, a_2, a_3, b_1, b_2, k$ are given real constants with $b_1, b_2, k>0$, $u(t,x), v(t,x)$ are real-valued functions of the time and space variables $t\ge 0$ and $0\le x\le 1$, the subscript $x$ and  the prime indicate the partial differentiation with respect to $x$ and $t$, respectively, and $[f]$ denotes the mean value of $f$ defined by
\begin{equation*}
[f]:=\int_0^1f(x)\ dx.
\end{equation*}
When $k=0$, system was proposed by Gear and Grimshaw \cite{GeaGri1984} as a model to describe strong
interactions of two long internal gravity waves in a stratified fluid, where the two waves
are assumed to correspond to different modes of the linearized equations of motion. It
has the structure of a pair of KdV equations with both linear and nonlinear coupling
terms and has been object of intensive research in recent years. In what concerns the
stabilization problems, most of the works have been focused on a bounded interval with a
localized internal damping (see, for instance, \cite{PazSou} and the references therein). In particular,
we also refer to \cite{BonPonSauTom1992} for an extensive discussion on the physical relevance
of the system and to \cite{Dav1,Dav4,Dav2,Dav3,DavCha2006} for the results used in this
paper.

We can (formally) check that the total energy 
\begin{equation*}
 E=\frac{1}{2}\int_0^1  b_2u^2 +  b_1v^2\ dx
\end{equation*}
associated with the model satisfies the inequality
\begin{equation*}
E' = -k\int_0^1  b_2(u-[u])^2 +  (v-[v])^2\ dx\le 0
\end{equation*}
in $(0,\infty)$, so that the energy in nonincreasing.
Therefore, the following basic questions arise: are the solutions asymptotically stable for
t sufficiently large? And if yes, is it possible to find a rate of decay? The aim of this paper is to answer these questions.

More precisely, we prove that for any fixed  integer $s\ge 3$, the solutions are exponentially stable in the Sobolev spaces
\begin{equation*}
H^{s}_{p}(0,1):=\set{u\in H^s(0,1)\ :\ \partial^n_x u(0)=\partial_x^n u(1),\quad n=0,\ldots,s}
\end{equation*}
with periodic boundary conditions. This extends an earlier theorem of Dávila in \cite{Dav3} for $s\le 2$. 

Before stating the stabilization result mentioned above, we first need to ensure the well posedness of the system.
This  was addressed by Dávila in \cite{Dav1} (see also \cite{Dav4}) under the following conditions on the coefficients:
\begin{equation}\label{12}
\begin{matrix}
a_3^2 b_2 < 1\text{ and } r=0\\
b_2a_1a_3 -b_1a_3 +b_1a_2 - a_2=0\\
b_1a_1-a_1-b_1a_2a_3+a_3=0\\
b_1a_2^2+b_2a_1^2-b_1a_1-a_2=0.
\end{matrix}
\end{equation}
Indeed, under conditions \eqref{12}, Dávila and Chaves \cite{DavCha2006} derived some conservation laws
for the solutions of  \eqref{11}. Combined with an approach introduced in \cite{BonSmi,SauTem}, these conservation laws allow them to establish the global well-posedness in $H^s_p(0,1)$, for any $s\geq 0$.  Moreover, the authors also give a simpler derivation of  the conservation laws discovered by Gear and Grimshaw, and Bona et al \cite{BonPonSauTom1992}. We also observe that these conservation properties were obtained
employing the techniques developed in \cite{MiuGarKru1968}  for the single KdV equation;
see also \cite{Miu1976}.

The well-posedness result reads as follows:

\begin{theorem}\label{t11}
Assume that condition \eqref{12} holds. If $\phi,\psi\in H^{s}_{p}(0,1)$ for some integer $s\ge 3$, then the system \eqref{11} has a unique solution satisfying
\begin{equation*}
u,v\in C([0,\infty);H^{s}_{p}(0,1))\cap C^1([0,\infty);H^{s-3}_{p}(0,1)).
\end{equation*}
Moreover, the map $(\phi,\psi)\mapsto (u,v)$ is continuous from $\left( H^{s}_{p}(0,1)\right) ^2$ into
\begin{equation*}
\left( C([0,\infty);H^{s}_{p}(0,1))\cap C^1([0,\infty);H^{s-3}_{p}(0,1))\right) ^2.
\end{equation*}
\end{theorem}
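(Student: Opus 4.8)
The plan is to prove Theorem~\ref{t11} by parabolic regularization and energy estimates, in the spirit of \cite{BonSmi,SauTem}. Dividing the second equation of \eqref{11} by $b_1$, write the system as a first-order evolution equation
\[
U_t + M\partial_x^3 U + kL_0 U + N(U) = 0,\qquad U(0)=(\phi,\psi),
\]
for $U=(u,v)$, where $M=\begin{pmatrix}1 & a_3\\ b_2a_3/b_1 & 1/b_1\end{pmatrix}$, $L_0$ is the bounded zeroth-order damping operator (note $r=0$ under \eqref{12}, so there is no genuine linear first-order term), and $N(U)$ collects the quadratic terms $uu_x$, $vv_x$, $(uv)_x$, etc. Since $H^s_p(0,1)$ is a Banach algebra for $s\ge1$, the map $N$ is continuous and locally Lipschitz from $(H^s_p(0,1))^2$ into $(H^{s-1}_p(0,1))^2$ with $N(0)=0$. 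Two structural facts drive everything. First, $Q:=\mathrm{diag}(b_2,b_1)$ is positive definite and $QM$ is symmetric, so $\langle QU,U\rangle=b_2\|u\|_{L^2}^2+b_1\|v\|_{L^2}^2$ is the natural energy; the identity for $E$ recalled in the introduction says precisely that all the terms of \eqref{11} except the damping cancel in this inner product. Second, integrating the two equations over $(0,1)$ and using periodicity shows $[u]$ and $[v]$ are conserved, so the damping really dissipates the zero-mean parts; writing $u=[\phi]+\tilde u$, $v=[\psi]+\tilde v$ then produces the transport terms $[\phi]\tilde u_x$, $[\psi]\tilde v_x$, which have to be carried along.

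For $\varepsilon>0$ I would first solve the regularized problem obtained by inserting a term $+\varepsilon\partial_x^4U^\varepsilon$. The operator $-\varepsilon\partial_x^4-M\partial_x^3-kL_0$ generates an analytic semigroup on $(H^s_p(0,1))^2$ (the third-order part being a lower-order perturbation of the fourth-order one), and the smoothing $\|e^{-\sigma\varepsilon\partial_x^4}\|_{H^{s-1}\to H^s}\lesssim(\varepsilon\sigma)^{-1/4}$ together with the algebra property and $\int_0^t\sigma^{-1/4}\,d\sigma<\infty$ makes the Duhamel map a contraction on a short interval, so the regularized problem has a unique solution $U^\varepsilon\in C([0,T_\varepsilon);(H^s_p(0,1))^2)$.

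The heart of the proof is then a uniform-in-$\varepsilon$ a priori estimate. Applying $\partial_x^s$ to the regularized equation and pairing with $Q\partial_x^sU^\varepsilon$ in $L^2$: the fourth-order term gives $-\varepsilon\langle Q\partial_x^{s+2}U^\varepsilon,\partial_x^{s+2}U^\varepsilon\rangle\le0$ and is discarded; the term $M\partial_x^3U^\varepsilon$ contributes $0$ because $QM$ is symmetric and constant, so two integrations by parts on the circle kill it; the bounded term $kL_0U^\varepsilon$ contributes $\lesssim\|U^\varepsilon\|_{H^s}^2$; and the nonlinearity is controlled by the Kato-type cancellation $\int u\,\partial_x^{s+1}u\,\partial_x^su\,dx=-\tfrac12\int u_x(\partial_x^su)^2\,dx$ and its analogues for the coupling terms, reducing $\langle\partial_x^sN(U^\varepsilon),Q\partial_x^sU^\varepsilon\rangle$ to $\lesssim(1+\|U^\varepsilon\|_{H^s})\|U^\varepsilon\|_{H^s}^2$. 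Hence $\tfrac{d}{dt}\|U^\varepsilon(t)\|_{H^s}^2\le C(\|U^\varepsilon(t)\|_{H^s})\|U^\varepsilon(t)\|_{H^s}^2$ with $C$ nondecreasing and independent of $\varepsilon$, whence a bound $\|U^\varepsilon(t)\|_{H^s}\le R$ on $[0,T]$ with $R$ and $T=T(\|(\phi,\psi)\|_{H^s})$ independent of $\varepsilon$; in particular $U^\varepsilon$ extends to $[0,T]$. From the equation $U^\varepsilon_t$ is bounded in $L^\infty(0,T;(H^{s-4}_p(0,1))^2)$ (the $\varepsilon\partial_x^4$-term going to $0$ there), so by Aubin--Lions a subsequence converges in $C([0,T];(H^{s'}_p(0,1))^2)$ for each $s'<s$ and weak-$*$ in $L^\infty(0,T;(H^s_p(0,1))^2)$ to a limit $U$; since the quadratic nonlinearity is continuous for the strong $C([0,T];H^{s'})$ topology with $1\le s'<s$, passing to the limit in the equation gives a solution $U\in L^\infty(0,T;(H^s_p(0,1))^2)$, and then $U\in C^1([0,T];(H^{s-3}_p(0,1))^2)$ directly from the equation.

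It remains to prove uniqueness and continuous dependence, upgrade $L^\infty$ to $C$ in time at the top level, and globalize. For two solutions sharing an $H^s$-bound, the difference $W=U_1-U_2$ satisfies a linear equation whose $Q$-weighted $L^2$ estimate closes --- the third-order term again gives $0$, and the quadratic difference is handled via $\int\partial_x((u_1+u_2)W_1)W_1\,dx=\tfrac12\int\partial_x(u_1+u_2)W_1^2\,dx$ and $H^2(0,1)\hookrightarrow L^\infty(0,1)$ --- giving $\|W(t)\|_{L^2}^2\le e^{Ct}\|W(0)\|_{L^2}^2$; this yields uniqueness and Lipschitz dependence in $C([0,T];L^2)$, and interpolation with the uniform $H^s$-bounds gives continuous dependence in $C([0,T];H^{s'})$ for $s'<s$. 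The Bona--Smith device --- approximating $(\phi,\psi)$ by smooth data and comparing the solutions at low regularity before interpolating --- then promotes both time-continuity and continuous dependence to the full $H^s_p(0,1)$ topology, so $U\in C([0,T];(H^s_p(0,1))^2)$ and the data-to-solution map has the stated continuity. Finally, for global existence: the energy identity recalled in the introduction keeps $\|U(t)\|_{L^2}$ bounded, and the conservation laws of Dávila and Chaves \cite{DavCha2006} valid under \eqref{12} --- which under the damping $k>0$ acquire only contributions of favorable sign modulo lower-order terms --- bound $\|U(t)\|_{H^s}$ for all $t$; once the solution is global in a low $H^s_p(0,1)$, the differential inequality above (with its constant now depending on the already controlled lower norm) rules out finite-time blow-up, so every local solution is global. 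I expect the main obstacle to be this $H^s$-energy estimate: making the symmetrizer $Q$, the Kato integration-by-parts identities, and the bookkeeping of the transport terms $[\phi]u_x$, $[\psi]v_x$ fit together exactly is the genuinely delicate computation.
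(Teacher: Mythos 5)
Your proposal follows essentially the same route as the proof the paper sketches in its appendix (due to Dávila): parabolic regularization by $\varepsilon\partial_x^4$, energy estimates with the symmetrizer $\mathrm{diag}(b_2,b_1)$ (the paper's $b_1\Lambda$), whose symmetry kills the third-order terms and makes the quadratic coupling terms cancel after integration by parts, a Bona--Smith approximation of the data to recover continuity and continuous dependence in the full $H^s_p$ topology, and the Dávila--Chaves conservation laws under \eqref{12} to obtain global-in-time bounds without smallness of the data. The only differences are organizational: you solve the regularized problem by an analytic-semigroup contraction rather than Galerkin, and you run direct $\partial_x^s$ tame estimates for local existence before invoking the conservation laws for globalization, whereas the paper builds its $H^1$ and $H^2$ a priori bounds from the conservation-law multipliers $I_2$, $I_3$ from the outset.
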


For $k=0$, the analogous theorem on the whole real line $-\infty<x<\infty$
was proved Bona et al. \cite{BonPonSauTom1992}, for all $s\ge 1$.

With the global well-posedness result in hand, we can focus on the stabilization problem.
For simplicity of notation we consider only the case
\begin{equation}\label{13}
b_1=b_2=1.
\end{equation}
Then the conditions \eqref{12} take the simplified form 
\begin{equation}\label{14}
r=0,\quad a_1^2+a_2^2=a_1+a_2,\quad \abs{a_3}<1,\quad\text{and}\quad  (a_1-1)a_3=(a_2-1)a_3=0.
\end{equation}
Hence either $a_3=0$ and $a_1^2+a_2^2=a_1+a_2$, or  $0<\abs{a_3}<1$ and $a_1=a_2=1$.

We prove the following theorem:

\begin{theorem}\label{t12}
Assume  \eqref{13} and \eqref{14}.
If $\phi,\psi\in H^{s}_{p}(0,1)$ for some integer $s\ge 3$, then the solution of \eqref{11} satisfies the estimate
\begin{equation*}
\norm{u(t)-[u(t)]}_{H^{s}_{p}(0,1)}+\norm{v(t)-[v(t)]}_{H^{s}_{p}(0,1)}=o\left( e^{-k't}\right) ,\quad t\to\infty
\end{equation*}
for each $k'<k$.
\end{theorem}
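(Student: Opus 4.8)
The plan is to reduce the statement to an exponential decay estimate for the mean-zero parts of $u$ and $v$ and then to prove that estimate by induction on $s$, with the case $s\le 2$ being Dávila's theorem from \cite{Dav3} and the inductive step a top-order energy estimate. First I would integrate each equation of \eqref{11} over $(0,1)$: by periodicity every spatial derivative and every nonlinear term integrates to zero, so $[u(t)]=[\phi]$ and $[v(t)]=[\psi]$ for all $t\ge0$ (here \eqref{13} is used to divide the second equation by $b_1$). Writing $p:=u-[\phi]$ and $q:=v-[\psi]$, the pair $(p,q)$ has mean zero and solves a system of the same form, with the same nonlinear terms and the same damping $kp,kq$, but with extra first-order linear terms $M_1p_x+M_2q_x$ in the first equation and $N_1p_x+N_2q_x$ in the second. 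A short computation gives $M_1=[\phi]+a_2[\psi]$, $N_2=[\psi]+a_1[\phi]$ and, crucially, $M_2=N_1=a_1[\psi]+a_2[\phi]$, so the matrix of these first-order coefficients is symmetric. Since $u(t)-[u(t)]=p(t)$ and $v(t)-[v(t)]=q(t)$, the theorem is equivalent to $\norm{p(t)}_{H^s_p(0,1)}+\norm{q(t)}_{H^s_p(0,1)}=o(e^{-k't})$ for each $k'<k$; for $s\le2$ this is the result of \cite{Dav3}, which I take as the base of the induction. (At the $L^2$ level one checks directly from the energy identity quoted in the introduction that $\norm{p(t)}^2+\norm{q(t)}^2=e^{-2kt}(\norm{p(0)}^2+\norm{q(0)}^2)$, so the rate is essentially sharp; here and below $\norm\cdot$ without a subscript is the $L^2(0,1)$-norm.)

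Now fix an integer $s\ge3$, assume the estimate at order $s-1$, fix $k'<k$ and choose $k''\in(k',k)$ and $\tilde k\in(k'',k)$. All the computations below are carried out first for smooth solutions, where Theorem~\ref{t11} makes them classical, and then extended to $H^s_p$-data by the continuous dependence stated there. Put $W:=\partial_x^sp$, $Z:=\partial_x^sq$ and $V(t):=\norm{W(t)}^2+\norm{Z(t)}^2$; since $\norm{p}_{H^s_p}^2+\norm{q}_{H^s_p}^2\le V+\norm{p}_{H^{s-1}_p}^2+\norm{q}_{H^{s-1}_p}^2$ and the induction hypothesis with rate $\tilde k$ controls the last two terms, it suffices to show $V(t)=o(e^{-2k't})$. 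Differentiate the two equations $s$ times in $x$ and test them with $W$ and $Z$ respectively. The resulting linear contributions to $\tfrac12V'$ add up to exactly $-kV$: the diagonal dispersive terms are skew-adjoint, the cross dispersive terms $a_3Z_{xxx}$ and $a_3W_{xxx}$ cancel against one another after integration by parts (the coefficient $a_3$ sitting symmetrically because of \eqref{13}), the first-order coupling contributes $M_2\int_0^1(WZ)_x\,dx=0$ precisely because $M_2=N_1$, and the damping gives $-k(\norm W^2+\norm Z^2)$.

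It remains to estimate the nonlinear contributions to $\tfrac12V'$. Expanding $\partial_x^s(pp_x)$, $\partial_x^s(qq_x)$ and $\partial_x^s((pq)_x)$ by the Leibniz rule and integrating by parts, the only surviving terms carrying a factor with $s+1$ derivatives come from the three quadratic couplings, and they recombine into the single perfect derivative $\int_0^1(a_1q+a_2p)(WZ)_x\,dx=-\int_0^1(a_1q_x+a_2p_x)WZ\,dx$ — the same algebraic cancellation that underlies the conservation laws of \cite{DavCha2006}. This term, together with the remaining top-order terms (of the shapes $\int_0^1 p_xW^2$, $\int_0^1 q_xZ^2$, $\int_0^1 p_xWZ$ and so on), is bounded by $C_0\bigl(\norm{p}_{H^2_p}+\norm{q}_{H^2_p}\bigr)V$, which is legitimate because $H^2_p(0,1)\hookrightarrow C^1$ and $s-1\ge2$. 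Each remaining term is a product of three factors two of which carry at most $s-1$ derivatives, so by the Sobolev embedding, the Gagliardo--Nirenberg inequality and Young's inequality it is bounded by $\tfrac12(k-k'')V+R(t)$, where $R(t)$ is a finite sum of powers with exponent $\ge2$ of $\norm{p}_{H^{s-1}_p}+\norm{q}_{H^{s-1}_p}$ and hence, by the induction hypothesis with rate $\tilde k$, satisfies $R(t)=o(e^{-2k''t})$. Since $\norm{p(t)}_{H^2_p}+\norm{q(t)}_{H^2_p}\to0$, there is $T$ with $C_0\bigl(\norm{p}_{H^2_p}+\norm{q}_{H^2_p}\bigr)\le\tfrac12(k-k'')$ for $t\ge T$; then $\tfrac12V'\le-kV+(k-k'')V+R(t)$, i.e.\ $V'\le-2k''V+2R(t)$, for $t\ge T$. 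Consequently $\tfrac{d}{dt}\bigl(e^{2k''t}V(t)\bigr)=2e^{2k''t}R(t)=o(1)$, so $e^{2k''t}V(t)=O(1)+o(t)$ and therefore $V(t)=o(e^{-2k't})$ because $k''>k'$; with the induction hypothesis this yields $\norm{p(t)}_{H^s_p}+\norm{q(t)}_{H^s_p}=o(e^{-k't})$, which closes the induction and completes the proof.

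The step I expect to be the real work is the nonlinear estimate in the inductive step: verifying that the potentially $(s+1)$st-order terms produced by the three quadratic couplings cancel up to the single perfect derivative $-\int_0^1(a_1q_x+a_2p_x)WZ\,dx$, and then bookkeeping the lower-order remainders so that each is controlled either by the small factor $\norm{p}_{H^2_p}+\norm{q}_{H^2_p}$ times $V$ or by a power of $\norm{p}_{H^{s-1}_p}+\norm{q}_{H^{s-1}_p}$ that the induction hypothesis forces to decay faster than $e^{-2k't}$. The other point that needs care is the choice of the auxiliary rate $k''\in(k',k)$, which is what lets one absorb the polynomial factor $o(t)$ produced by the integration at the end; no time-varying coefficient in the Lyapunov functional is actually needed at this level of regularity, since the induction already supplies the integrable error coming from the lower-order norms.
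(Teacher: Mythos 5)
Your argument is essentially correct, and for the inductive step $s\ge 3$ it is the same argument as the paper's Section~\ref{s6}: after passing to the mean-zero unknowns, one tests the $s$-times differentiated system with $\partial_x^s$ of the solution, observes that the linear part contributes exactly $-k$ times the top-order energy (the cross-dispersive terms cancel by skew-adjointness and the first-order coupling is a perfect derivative because its coefficient matrix is symmetric), and exploits the algebraic pairing of the $a_1$- and $a_2$-terms across the two equations so that the potentially uncontrollable $(s+1)$st-order contributions collapse to $-\int(a_1q_x+a_2p_x)WZ\,dx$ plus terms of the form $\int p_xW^2$; the remainder is then absorbed using the induction hypothesis (your Gronwall argument with the auxiliary rates $k'<k''<\tilde k<k$ plays the role of the paper's Lemma~\ref{l23} with $g\equiv 0$, and your trilinear bookkeeping is the content of Lemma~\ref{l25}). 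Where you genuinely diverge is the base case: you import the cases $s\le 2$ from Dávila's theorem in \cite{Dav3}, whereas the paper reproves them from scratch (Sections~\ref{s3}--\ref{s5}), and those low-order cases are precisely where the naive energy identity fails to close and where the paper must introduce the cubic correction functionals $g$ (and $h$) built from the conservation laws of \cite{BonPonSauTom1992} and \cite{DavCha2006}, combined with the Lyapunov lemma \ref{l23}. Your allocation of work is legitimate and shorter, but it makes the proof non-self-contained and hinges on \cite{Dav3} (a proceedings note) delivering the statement in exactly the form your induction needs, namely decay $o(e^{-k't})$ in $H^2_p$ for \emph{every} $k'<k$; if Dávila's result only gives some unspecified exponential rate, your induction at $s=3$ would not produce the sharp rate claimed in the theorem. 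Apart from that dependency, the only point requiring a little more care than you give it is the borderline remainder terms (e.g.\ $\int W\,\partial_x^2p\,\partial_x^{s-1}p\,dx$ when $s=3$), where one factor must be split between the small quantity absorbed into $V$ and the decaying $H^{s-1}$-norms; this does go through by Gagliardo--Nirenberg exactly as in Lemma~\ref{l25}.
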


An analogous theorem was proved in \cite{KomRusZha1991} for the usual KdV equation by using the infinite family
of conservation laws for this equation. Such conservations lead to the construction of a suitable Lyapunov function that gives the exponential decay of the solutions. Here, we
follow the same approach making use of the results established by Dávila and Chavez \cite{DavCha2006}.
They proved that under the assumptions \eqref{12} system \eqref{11}
also has an infinite family of conservation laws, and they conjectured the above theorem for this case.

In order to obtain the result, we prove a number of identities and estimates for the solutions of  \eqref{11}. In view of Theorem \ref{t11} it suffices to establish these estimates for \emph{smooth solutions}, i.e., to solutions corresponding to $C^{\infty}$ initial data $\phi,\psi$ with periodic boundary conditions. For such solutions all formal manipulations in the sequel will be justified.

Finally, we also observe that a similar result was obtained in \cite{LauRosZha2010} for the scalar KdV equation on a periodic domain. The authors study the model from a control point of view with a forcing term $f$ supported in a given open set of the domain. It is shown that the system is globally exactly controllable and globally exponentially stable. The stabilization is established with the aid of certain properties of propagation of compactness and regularity in Bourgain spaces for the solutions of the corresponding linear system. We also refer to \cite{LauRosZha2010} for a quite complete review on the subject.

The paper is organized as follows. In Section 2 introduce the basic notations and we prove some technical lemmas. Sections 3  to 6 are devoted to the proof of the exponential decay in $H^s_p$, for $s=0, 1, 2$ and $s\geq 3$, respectively. 

\section{Some technical lemmas}
\label{s2}

In the sequel all integrals are taken over the interval $(0,1)$ so we omit the integration limits.

As explained in the introduction, all integrations by parts will be done for smooth periodic functions. Therefore, we will regularly use the simplified formulas 
\begin{equation*}
\int f_xg\ dx=-\int fg_x\ dx
\quad\text{and}\quad
\int f^nf_x\ dx=0\quad(n=0,1,\ldots)
\end{equation*}
without further explanation, and we will also use the simplified notation 
\begin{equation*}
f_n:=\frac{d^nf}{dx^n},\quad n=1,2,\ldots
\end{equation*}
As an example of the application of these rules we show that the mean-values of of the solutions are conserved:

\begin{lemma}\label{l21}
The mean-values $[u]$ and $[v]$ of the solutions of \eqref{11} do not depend on $t$.
\end{lemma}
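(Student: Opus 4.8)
The goal is to show $\frac{d}{dt}[u(t)] = \frac{d}{dt}[v(t)] = 0$. The plan is to integrate each of the two evolution equations in \eqref{11} over the spatial interval $(0,1)$ and use periodicity to kill all the divergence-type terms.

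First I would integrate the first equation of \eqref{11} in $x$. The term $\int u'\ dx$ equals $\frac{d}{dt}[u]$. Every remaining term except the damping term is an exact $x$-derivative: $\int uu_x\ dx = \frac12\int (u^2)_x\ dx = 0$, $\int u_{xxx}\ dx = 0$, $\int v_{xxx}\ dx = 0$, $\int vv_x\ dx = 0$, and $\int (uv)_x\ dx = 0$, all by periodicity of the smooth solution and its derivatives (the simplified integration-by-parts rules recalled just before the lemma). For the damping term, $\int k(u-[u])\ dx = k[u] - k[u] = 0$ since $[\,[u]\,] = [u]$ (the mean value is a constant in $x$). Hence $\frac{d}{dt}[u] = 0$.

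Next I would do the same with the second equation: $\int b_1 v'\ dx = b_1\frac{d}{dt}[v]$, while $\int rv_x\ dx$, $\int vv_x\ dx$, $\int v_{xxx}\ dx$, $\int b_2a_3 u_{xxx}\ dx$, $\int b_2a_2 uu_x\ dx$, and $\int b_2a_1(uv)_x\ dx$ all vanish by periodicity, and $\int k(v-[v])\ dx = 0$ as before. Since $b_1 > 0$, this gives $\frac{d}{dt}[v] = 0$. By Theorem \ref{t11} it suffices to argue for smooth solutions, so all these manipulations are justified, and the result extends to general $H^s_p$ data by the continuity of the solution map.

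There is no real obstacle here — the lemma is essentially a bookkeeping exercise, and the only point requiring the slightest care is noting that the coupling terms $a_3 v_{xxx}$, $a_2(uv)_x$, etc., are themselves perfect $x$-derivatives, so that the linear and nonlinear coupling does not spoil the conservation of the means.
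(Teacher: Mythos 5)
Your proposal is correct and follows essentially the same route as the paper: integrate each equation over $(0,1)$, observe that every term other than the damping term is an exact $x$-derivative of a periodic function, and note that $\int k(u-[u])\,dx=0$. The only (harmless) difference is that you keep the coefficients $b_1,b_2,r$ explicit, whereas the paper's computation is written with the normalization $b_1=b_2=1$, $r=0$ already in force.
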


\begin{proof}
We have
\begin{align*}
[u]'
&=-\int uu_x+u_{xxx}+a_3v_{xxx}+a_1vv_x+a_2(uv)_x+k(u-[u])\ dx\\
&=-\int \left( \frac{u^2}{2}+u_{xx}+a_3v_{xx}+a_1\frac{v^2}{2}+a_2uv\right)_x+k(u-[u])\ dx\\
&=-k\int (u-[u])\ dx\\
&=0
\end{align*}
and
\begin{align*}
[v]'
&=-\int vv_x+v_{xxx}+a_3u_{xxx}+a_2uu_x+a_1(uv)_x+k(v-[v])\ dx\\
&=-\int \left( \frac{v^2}{2}+v_{xx}+a_3u_{xx}+a_2\frac{u^2}{2}+a_1uv\right)_x+k(v-[v])\ dx\\
&=-k\int (v-[v])\ dx\\
&=0
\end{align*}
by a straightforward computation.
\end{proof}

Motivated by this result we set $M=[\varphi]$, $N=[\psi]$ and we rewrite \eqref{11} by changing $u, v, \varphi, \psi$ to $u-[u]=u-M$, $v-[v]=v-N$, $\varphi-[\varphi]=\varphi-M$ and $\psi-[\psi]=\psi-N$, respectively. Under our assumptions $r=0$ and $b_1=b_2=1$ we obtain the equivalent system
\begin{equation}\label{21}
\begin{cases}
u'+(u+M)u_x+u_{xxx}+a_3v_{xxx}+a_1(v+N)v_x\\
\hspace*{5cm}+a_2((u+M)(v+N))_x+ku=0,\\
v'+(v+N)v_x+v_{xxx}+a_3u_{xxx}+a_2(u+M)u_x\\
\hspace*{5cm}+a_1((u+M)(v+N))_x+kv=0,\\
u(0,x)=\phi(x),\\
v(0,x)=\psi(x)
\end{cases}
\end{equation}
with periodic boundary conditions, corresponding to initial data $\phi,\psi$ with zero mean values.
Theorem \ref{t12} will thus follow from the following proposition:

\begin{proposition}\label{p22}
Under the assumptions of Theorem \ref{t12} the smooth solutions of \eqref{21} satisfy the identity
\begin{equation}\label{22}
\int u(t)^2+v(t)^2\ dx=e^{-2kt}\int \phi^2+\psi^2\ dx,\quad t\ge 0,
\end{equation}
and the estimates
\begin{equation*}
e^{2k't}\int \left( \partial_x^nu(t)\right) ^2+\left( \partial_x^nv(t)\right) ^2 dx\to 0\quad\text{as}\quad t\to\infty
\end{equation*}
for all positive integers $n$ and for all  $k'<k$.
\end{proposition}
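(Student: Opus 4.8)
The plan is to prove \eqref{22} first, then bootstrap to the higher-order estimates by induction on $n$, using the energy-type identity for $\int (\partial_x^n u)^2 + (\partial_x^n v)^2$. For \eqref{22}, I would multiply the first equation of \eqref{21} by $u$, the second by $v$, add, and integrate over $(0,1)$. The dispersive terms $u_{xxx}$ and $v_{xxx}$ integrate against $u$ and $v$ to give $\tfrac12\int (u^2)_{xx}\,dx$-type quantities that vanish by periodicity, or combine into exact derivatives. The cross dispersive terms $a_3 v_{xxx} u + a_3 u_{xxx} v$ combine, after integration by parts, into $a_3 \int (v_{xx} u + u_{xx} v)_x\,dx$ minus matching pieces, and one checks they cancel because the coefficient $a_3$ is the same in both equations (this is exactly where the symmetry forced by \eqref{14}, i.e. $b_1=b_2=1$, is used). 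The nonlinear terms — those involving $(u+M)u_x$, $(v+N)v_x$, $a_1(v+N)v_x$, $a_2((u+M)(v+N))_x$ and their counterparts — must all integrate to zero; here I would expect to use $\int f^n f_x\,dx = 0$ and the algebraic relation $a_1^2 + a_2^2 = a_1 + a_2$ from \eqref{14} to get the cross terms to cancel. Collecting everything leaves $\tfrac{d}{dt}\int u^2 + v^2\,dx = -2k\int u^2 + v^2\,dx$, and \eqref{22} follows by integrating this scalar ODE.

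For the higher-order estimates, I would differentiate the system $n$ times in $x$, set $U = \partial_x^n u$, $V = \partial_x^n v$, multiply by $U$ and $V$ respectively, add and integrate. The linear part produces $-2k\int U^2 + V^2\,dx$ exactly as before (the damping commutes with $\partial_x^n$ and the constants $M,N$ disappear under differentiation), plus the dispersive terms which again vanish or cancel by periodicity and the $a_3$-symmetry. The genuinely new contribution is a collection of lower-order terms coming from the Leibniz rule applied to the quadratic nonlinearities: schematically, $\frac{d}{dt}\int U^2 + V^2\,dx \le -2k\int U^2 + V^2\,dx + C\left(\norm{u}_{H^{n}}, \norm{v}_{H^{n}}\right)\left(\int U^2 + V^2\,dx\right) + (\text{terms controlled by }\partial_x^j u, \partial_x^j v,\ j\le n-1)$, where the constant $C$ depends on lower Sobolev norms of the solution. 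Since $s\ge 3$ guarantees $u,v \in C([0,\infty);H^s_p)$ are bounded in $H^1$ (hence in $L^\infty$ and $W^{1,\infty}$) on $[0,\infty)$ — actually one needs a uniform-in-time bound, which should come from the $s=0,1,2$ cases treated in the earlier sections, or from the conservation laws of Dávila and Chaves — the coefficient multiplying $\int U^2 + V^2$ is uniformly bounded, and the induction hypothesis gives $e^{2k't}\int (\partial_x^j u)^2 + (\partial_x^j v)^2\,dx \to 0$ for $j \le n-1$.

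To close the induction I would use a Gronwall / integrating-factor argument. Writing $y_n(t) = \int (\partial_x^n u)^2 + (\partial_x^n v)^2\,dx$ and $g_n(t)$ for the (nonnegative, by induction decaying faster than $e^{-2k't}$) lower-order remainder, the differential inequality reads $y_n' \le -(2k - \epsilon(t)) y_n + g_n(t)$ with $\epsilon(t)$ bounded and, ideally, small or controllable; multiplying by the integrating factor $e^{2k't}$ and integrating, one gets $e^{2k't} y_n(t) \le e^{2k't_0} y_n(t_0) e^{-(k-k'-\sup\epsilon)(t-t_0)} + \int_{t_0}^t e^{2k's}g_n(s)\,ds \cdot(\ldots)$, and since $k' < k$ and $g_n$ decays strictly faster than $e^{-2k't}$, the right side tends to $0$.

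The main obstacle I anticipate is twofold: first, verifying that \emph{all} the quadratic and cross terms in the $L^2$ computation really cancel — this is a bookkeeping exercise that crucially exploits the precise algebraic constraints \eqref{14} on $a_1,a_2,a_3$, and getting the signs and coefficients right (especially for the mixed term $a_2((u+M)(v+N))_x$ tested against $u$ versus $a_1((u+M)(v+N))_x$ tested against $v$) is delicate. Second, and more seriously, securing a \emph{uniform-in-time} bound on the lower Sobolev norms $\norm{u(t)}_{H^{n-1}}, \norm{v(t)}_{H^{n-1}}$ so that the constant $C$ in the differential inequality does not grow with $t$; without this the integrating-factor argument fails. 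This is presumably why the paper proves the cases $s=0,1,2$ separately and in order first — each level supplies the uniform bound needed to run the energy estimate at the next level — and the $n$-th step of the present proposition must invoke those results (or the conservation laws) rather than being self-contained.
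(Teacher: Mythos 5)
Your outline for $n=0$ and for $n\ge 3$ matches the paper's proof, but there is a genuine gap at $n=1$ and $n=2$, and it is precisely the step you flag as "the main obstacle." At level $n=1$ the energy identity (the paper's \eqref{42}) produces the terms $-\int u_1^3+v_1^3\ dx$, $-3a_1\int u_1v_1^2\ dx$, $-3a_2\int u_1^2v_1\ dx$: these are \emph{cubic in the top-order derivative}, not "lower-order remainders." To absorb $\int u_1^3\,dx$ into your Gronwall inequality you must bound $\norm{u_1}_{L^\infty}$, which requires control of $\norm{u}_{H^2}$ — the \emph{next} induction step, so the argument is circular; a uniform $H^1$ bound does not give $W^{1,\infty}$, contrary to what you assert. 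Moreover, even granting a uniform-in-time bound $\norm{u_1}_{L^\infty}\le C_0$ from the conservation laws, your differential inequality $y_1'\le -(2k-\epsilon(t))y_1$ only has $\epsilon(t)$ \emph{bounded}, not tending to zero, so Gronwall yields decay at the degraded rate $2k-3C_0$ rather than $2k'$ for \emph{every} $k'<k$, which is what the proposition claims. The same problem recurs at $n=2$ with terms such as $\int u_2^2u_1\,dx$, whose coefficient $\norm{u_1}_{L^\infty}$ is not yet known to decay.

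The paper's resolution, which your proposal is missing, is to replace the plain energy by a Lyapunov functional with a cubic correction drawn from the conservation laws of Bona et al.\ and Dávila--Chaves: for $n=1$ one takes $f=\int u_1^2+v_1^2+2a_3u_1v_1\ dx$ and $g=-\frac13\int (u^3+v^3)+3(a_1uv^2+a_2u^2v)\ dx$ and verifies the \emph{exact} identity $(f+g)'=-2kf-3kg$ (Lemma \ref{l41}); the troublesome cubic terms cancel identically rather than being estimated, and since $g=o(f)$ by the already-proved $n=0$ decay and Lemma \ref{l25}, Lemma \ref{l23} delivers the full rate $k'<k$. An analogous functional (Lemma \ref{l51}) handles $n=2$, and only from $n\ge 3$ onward does your "differentiate, test, and absorb the Leibniz remainders" scheme work as stated, because there the worst terms reduce to $\int u_n^2u_1\,dx$ with $\norm{u_1}_{L^\infty}\le\norm{u_2}_{L^2}\to 0$ supplied by induction. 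A minor additional point: at $n=0$ the nonlinear terms cancel by pure integration by parts (e.g.\ $\int uvv_1+v(uv)_1\,dx=\int (uv^2)_1\,dx=0$), without invoking $a_1^2+a_2^2=a_1+a_2$; the algebraic constraints \eqref{14} are needed for the well-posedness theory and for the higher-order conservation laws, e.g.\ to kill the functional $h$ in Lemma \ref{l51}.
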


\begin{remark}
For $n=1$ the proposition and its proof remain valid under the weaker assumption that $\abs{a_3}<1$. We can also add the term $rv_x$ to the equation by changing $g$ to $g-rv^2$ in Lemma \ref{l41}.
\end{remark}

Proposition \ref{p22} is proved by using the Lyapunov method. More precisely, we shall use the following lemma:

\begin{lemma}\label{l23}
Let $f:(0,\infty)\to\RR$ be a nonnegative function, and write $h_1\approx h_2$ if $h_1-h_2=o(f)$ as $t\to\infty$.

If there exists a function $g:(0,\infty)\to\RR$ such that $g\approx 0$, $f+g$ is continuously differentiable, and
$(f+g)'\approx -2kf$ for some positive number $k$, then
\begin{equation*}
e^{2k't}f(t)\to 0\quad\text{as}\quad t\to\infty
\end{equation*}
for each $k'<k$.
\end{lemma}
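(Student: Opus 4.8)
The plan is to prove Lemma~\ref{l23} directly from the definitions, by reducing the statement to a single real-variable computation and then noting that $f$ and $f+g$ have the same asymptotics.

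First I would set $F := f+g$. By hypothesis $F$ is continuously differentiable, $F' \approx -2kf$, and since $g \approx 0$ we also have $F \approx f$, i.e.\ $F - f = o(f)$ and $F' + 2kf = o(f)$ as $t \to \infty$. Fix $k' < k$ and choose $k''$ with $k' < k'' < k$. The key step is to show $e^{2k''t}F(t) \to 0$; once this is established, $e^{2k't}f(t) = e^{2k't}F(t) - e^{2k't}g(t)$, and both terms on the right tend to $0$ — the first because $e^{2k't}F(t) = e^{-2(k''-k')t}\cdot e^{2k''t}F(t)$ and $e^{2k''t}F(t)\to 0$, the second because $g = o(f)$ forces $g = o(F) + o(g)$... actually more carefully: from $g \approx 0$ and $e^{2k''t}F(t)$ bounded we need $e^{2k't}g(t)\to 0$. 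Since $g = o(f)$ and $f = F - g$, for large $t$ we get $|g| \le \tfrac12|f| \le \tfrac12(|F| + |g|)$, hence $|g| \le |F|$, so $e^{2k't}|g(t)| \le e^{2k't}|F(t)| \to 0$ as well. Thus it suffices to prove $e^{2k''t}F(t)\to 0$.

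To prove this, I would write a Gronwall-type estimate for $F$. From $F' + 2kf = o(f)$ we have, for every $\eps \in (0,1)$, some $T$ with $|F'(t) + 2kf(t)| \le \eps f(t)$ for $t \ge T$; combined with $|F - f| \le \eps f$ (again for $t$ large, possibly enlarging $T$), we get both $f \le F/(1-\eps)$ and $F'(t) \le -2kf(t) + \eps f(t) = -(2k-\eps)f(t) \le -\tfrac{2k-\eps}{1+\eps}F(t)$ when $F(t)\ge 0$, while if $F(t) < 0$ the same inequality $F'(t)\le -(2k-\eps)f(t)$ and $f \ge 0$ still control things — in fact one checks $F$ stays eventually of one sign or decays regardless. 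The cleanest route: note $F(t) \ge (1-\eps) f(t) \ge 0$ for $t \ge T$ (since $|F-f|\le\eps f$ gives $F \ge (1-\eps)f \ge 0$), so $F$ is eventually nonnegative, and then $F'(t) \le -(2k - \eps) f(t) \le -\tfrac{2k-\eps}{1+\eps} F(t)$. Choosing $\eps$ small enough that $\tfrac{2k-\eps}{1+\eps} > 2k''$, Gronwall gives $F(t) \le F(T) e^{-2k''(t - T)}$ for $t \ge T$, whence $e^{2k''t}F(t)$ is bounded and, running the argument with $k''$ replaced by any intermediate value, tends to $0$. (Equivalently: $(e^{2k''t}F(t))' = e^{2k''t}(F'(t) + 2k''F(t)) \le e^{2k''t}(2k'' - \tfrac{2k-\eps}{1+\eps})F(t) \le 0$, so $e^{2k''t}F(t)$ is nonincreasing and nonnegative, hence converges; its limit is $0$ because otherwise $F(t)$ would be bounded below by $c\,e^{-2k''t}$, contradicting $F' \le -(2k-\eps)f \le -(2k - \eps)(1+\eps)^{-1}F$ with a strictly larger decay rate.)

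The main obstacle is the bookkeeping around the little-$o$ hypotheses: since $f$ need not be bounded away from $0$ or monotone, one must be careful that "$F \ge (1-\eps)f \ge 0$ eventually" and the differential inequality are used only on the region where they hold, and that the conclusion $e^{2k't}f(t)\to 0$ is extracted with the correct ordering $k' < k'' < k$ so there is genuine exponential room to absorb the error terms. No compactness or regularity beyond $C^1$ of $f+g$ is needed; it is a soft argument once the inequalities are set up. I would present it as: (i) reduce to $F = f+g$ and fix $k' < k'' < k$; (ii) show $F \ge 0$ and $F' \le -2k'' F$ eventually; (iii) conclude $e^{2k''t}F(t)$ nonincreasing, nonnegative, hence with a limit, and the limit is $0$; (iv) transfer back to $f$ using $g \approx 0$.
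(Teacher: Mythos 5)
Your proof is correct and follows essentially the same route as the paper's: fix an intermediate rate $k''$ with $k'<k''<k$, quantify the two little-$o$ hypotheses by an $\eps$ valid for $t$ large, deduce the differential inequality $(f+g)'\le -2k''(f+g)$ (you get a slightly stronger rate $\tfrac{2k-\eps}{1+\eps}>2k''$, the paper tunes $\eps$ to hit $2k''$ exactly), integrate, and transfer back to $f$ using $(1-\eps)f\le f+g$. The only difference is cosmetic: you push to $e^{2k''t}(f+g)\to 0$ and subtract off $g$, whereas the paper only needs $e^{2k''t}(f+g)$ bounded and absorbs the gap $k''-k'$ into the final exponential.
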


\begin{proof}
Fix $k''>0$ such that $k'<k''<k$, and then fix  $\eps>0$ such that
\begin{equation*}
\frac{1-\eps}{1+\eps}=\frac{k''}{k}.
\end{equation*}
Finally, choose a sufficiently large $t'>0$ such that
\begin{equation*}
(1-\eps)f(t)\le (f+g)(t)\le (1+\eps)f(t)
\end{equation*}
and
\begin{equation*}
2k(1-\eps)f(t)\le -(f+g)'(t)\le 2k(1+\eps)f(t)
\end{equation*}
for all $t\ge t'$. Then for $t\ge t'$ we have
\begin{equation*}
-(f+g)'(t)\ge 2k(1-\eps)f(t)\ge 2k\frac{1-\eps}{1+\eps}(f+g)(t)=2k''(f+g)(t),
\end{equation*}
whence
\begin{equation*}
\frac{d}{dt}\left( e^{2k''t}(f+g)(t)\right)\le 0.
\end{equation*}
It follows that
\begin{equation*}
e^{2k''t}(f+g)(t)\le  e^{2k''t'}(f+g)(t')
\end{equation*}
for all $t\ge t'$, and hence
\begin{equation*}
0\le e^{2k't}f(t)\le  \frac{e^{2k''t'}(f+g)(t')}{1-\eps} e^{-2(k''-k')t}
\end{equation*}
for all $t\ge t'$. We conclude by observing that $e^{-2(k''-k')t}\to 0$ as $t\to\infty$.
\end{proof}

For the proof of the next result, we shall use the H\"older and Poincaré--Wirtinger inequalities in the following form.
The second estimate will be used only for functions with mean value zero: $[u]=0$.

\begin{lemma}\label{l24} 
If $p, q \in [0,\infty)$, then
\begin{align}
&\norm{u}_p\le \norm{u}_q\quad\text{for all}\quad u\in L^q(0,1)\quad\text{and}\quad 1\le p\le q\le\infty;\label{23}\\
&\norm{u-[u]}_p\le \norm{u_x}_q\quad\text{for all}\quad u\in H^1(0,1)\quad\text{and}\quad 1\le p, q\le\infty.\label{24}
\end{align}
\end{lemma}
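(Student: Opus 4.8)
The plan is to deduce \eqref{23} from Jensen's (i.e.\ Hölder's) inequality on the unit interval, viewed as a probability space, and then to reduce \eqref{24} to the single endpoint case $p=\infty$, $q=1$, which follows from the fundamental theorem of calculus.

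First I would establish \eqref{23}. If $q=\infty$ there is nothing to do when $p=\infty$, and for $p<\infty$ we have $\int\abs{u}^p\,dx\le\norm{u}_\infty^p\int 1\,dx=\norm{u}_\infty^p$. If $p\le q<\infty$, then since $(0,1)$ has total measure one, Hölder's inequality with exponents $q/p$ and its conjugate gives
\begin{equation*}
\int\abs{u}^p\,dx\le\left(\int\abs{u}^q\,dx\right)^{p/q}\left(\int 1\,dx\right)^{1-p/q}=\norm{u}_q^p,
\end{equation*}
and raising to the power $1/p$ yields $\norm{u}_p\le\norm{u}_q$. This exhausts all cases with $1\le p\le q\le\infty$.

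Next, for \eqref{24} I would argue by reduction. If $u_x\notin L^q(0,1)$ the right-hand side is infinite and there is nothing to prove, so assume $u_x\in L^q(0,1)$. Then by \eqref{23} applied to $u-[u]$ (with exponents $p\le\infty$) and to $u_x$ (with exponents $1\le q$),
\begin{equation*}
\norm{u-[u]}_p\le\norm{u-[u]}_\infty\qquad\text{and}\qquad\norm{u_x}_1\le\norm{u_x}_q,
\end{equation*}
so it suffices to prove $\norm{u-[u]}_\infty\le\norm{u_x}_1$. Since $u\in H^1(0,1)\hookrightarrow C[0,1]$, the function $u$ is continuous, so the number $[u]=\int_0^1 u\,dy$, which lies between $\min_{[0,1]}u$ and $\max_{[0,1]}u$, is a value attained by $u$: there is a point $x_0\in[0,1]$ with $u(x_0)=[u]$. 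For any $x\in[0,1]$ we then have $u(x)-[u]=u(x)-u(x_0)=\int_{x_0}^x u_x(y)\,dy$, whence $\abs{u(x)-[u]}\le\int_0^1\abs{u_x(y)}\,dy=\norm{u_x}_1$; taking the supremum over $x$ gives the claim.

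As for the main obstacle, there really is none: both inequalities are classical. The only point that deserves an explicit word is the existence of the point $x_0$ at which $u$ equals its mean value, which rests on the continuous embedding $H^1(0,1)\hookrightarrow C[0,1]$ together with the intermediate value theorem. I would also remark that periodicity of $u$ plays no role in this lemma; it is relevant only later, when \eqref{24} is applied to the (zero-mean, periodic) solutions of \eqref{21}.
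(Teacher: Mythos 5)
Your proof is correct. Note that the paper itself gives no proof of Lemma \ref{l24}: it is stated as a classical fact (the H\"older and Poincar\'e--Wirtinger inequalities on the unit interval), so there is no argument of the authors' to compare yours against. Your write-up supplies exactly the details one would want: \eqref{23} is Jensen/H\"older on a probability space, and for \eqref{24} the reduction to the endpoint case $p=\infty$, $q=1$ together with the intermediate-value argument (producing a point $x_0$ with $u(x_0)=[u]$, then writing $u(x)-[u]=\int_{x_0}^{x}u_y\,dy$) is the standard way to see that the Poincar\'e--Wirtinger constant is $1$ on $(0,1)$, which is the sharpest form needed for the exponents as stated. Your closing remark that periodicity is irrelevant here is also accurate; the paper only notes that \eqref{24} will be applied to zero-mean functions. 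The one blemish is in the lemma's own statement rather than in your proof: the hypothesis ``$p,q\in[0,\infty)$'' is plainly a typo, since the displayed inequalities allow $p$ or $q$ to equal $\infty$, and your case analysis correctly covers that endpoint.
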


We shall frequently use Lemma \ref{l23} together with the following result:

\begin{lemma}\label{l25}
Let $n\ge 1$ and let $\alpha_m, \beta_m$, $m=0,\ldots,n$ be nonnegative integers satisfying the two conditions
\begin{equation*}
2(\alpha_n+\beta_n)+\alpha_{n-1}+\beta_{n-1}\le 4
\end{equation*}
and
\begin{equation*}
d:=\sum_{m=0}^n\left( \alpha_m+\beta_m\right) \ge 2.
\end{equation*}
Then
\begin{equation*}
\abs{\int\prod_{m=0}^nu_m^{\alpha_m}v_m^{\beta_m}\ dx}\le \left( \int u_n^2+v_n^2 \ dx\right) \left( \int u_{n-1}^2+v_{n-1}^2 \ dx\right) ^{\frac{d-2}{2}}.
\end{equation*}

If, moreover, $d\ge 3$ and
\begin{equation*}
\int u_{n-1}^2+v_{n-1}^2 \ dx\to 0,
\end{equation*}
then it follows that
\begin{equation*}
\int\prod_{m=0}^nu_m^{\alpha_m}v_m^{\beta_m}\ dx=o\left( \int u_n^2+v_n^2 \ dx\right)
\end{equation*}
as $t\to\infty$.
\end{lemma}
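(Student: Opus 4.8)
The plan is to estimate the integral $\int\prod_{m=0}^n u_m^{\alpha_m}v_m^{\beta_m}\,dx$ by separating the highest-order factors (those carrying $u_n$ or $v_n$) from the rest and applying Hölder's inequality together with the Poincaré--Wirtinger-type bounds of Lemma~\ref{l24}. First I would introduce $a:=\alpha_n+\beta_n$ and $b:=\alpha_{n-1}+\beta_{n-1}$, and also bundle the remaining factors $\prod_{m=0}^{n-2}u_m^{\alpha_m}v_m^{\beta_m}$, whose total degree is $d-a-b$. For the genuinely lower-order factors with $m\le n-2$ I would use that, after passing to the mean-zero parts (which is legitimate since $[u]=[v]=0$ and derivatives have zero mean), each such factor is bounded in $L^\infty$ by $\|u_{n-1}\|_2$ or $\|v_{n-1}\|_2$ via repeated application of \eqref{24} and \eqref{23}; in fact for any $j\le n-1$ one has $\|u_j\|_\infty\le\|u_{j+1}\|_1\le\|u_{j+1}\|_2\le\cdots$, so everything controls back to the $(n-1)$-st derivative in $L^2$. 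Thus each of the $d-a-b$ lowest-order factors contributes a factor $(\int u_{n-1}^2+v_{n-1}^2\,dx)^{1/2}$ in $L^\infty$ norm.

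The delicate part is the block of $a+b$ factors of order $n-1$ and $n$. Here I would use Hölder with carefully chosen exponents so that each $u_n,v_n$ factor appears with exponent $2$ and each $u_{n-1},v_{n-1}$ factor with exponent $\le 2$; the structural constraint $2a+b\le 4$ is exactly what makes this possible. Concretely, if $a=2$ then $b=0$ and the two top-order factors are put in $L^2$ directly, giving $\int u_n^2+v_n^2\,dx$ with no remaining $(n-1)$-order factors in this block; if $a=1$ then $b\le 2$, and I place the single $u_n$ (or $v_n$) in $L^2$ contributing $(\int u_n^2+v_n^2\,dx)^{1/2}$, while the up-to-two order-$(n-1)$ factors go into $L^4$, where $\|u_{n-1}\|_4\le\|u_n\|_1\le\|u_n\|_2$ by \eqref{24} and \eqref{23}; if $a=0$ then $b\le 4$ and there is no top-order factor at all, one simply bounds the order-$(n-1)$ factors in the appropriate $L^p$ by $\|u_{n-1}\|_2$ or $\|v_{n-1}\|_2$ using \eqref{23}. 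In every case the total count of $L^2$-type factors of $u_{n-1},v_{n-1}$ coming out of this block plus the $d-a-b$ lowest-order factors sums to $d-2$ when $a=1$, to $d-2$ when $a=0$ (since then $d\ge 2$ forces $b\ge 2$ and we peel off two of them to match), and to $d-2=0$ extra when $a=2$. Assembling, and using $\|u_n\|_2,\|v_n\|_2\le(\int u_n^2+v_n^2\,dx)^{1/2}$ and likewise for $u_{n-1},v_{n-1}$, yields
\[
\abs{\int\prod_{m=0}^n u_m^{\alpha_m}v_m^{\beta_m}\,dx}\le\left(\int u_n^2+v_n^2\,dx\right)\left(\int u_{n-1}^2+v_{n-1}^2\,dx\right)^{\frac{d-2}{2}}.
\]

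Once this bound is in hand, the final assertion is immediate: if $d\ge 3$ then $(d-2)/2\ge 1/2>0$, so $\left(\int u_{n-1}^2+v_{n-1}^2\,dx\right)^{(d-2)/2}\to 0$ under the stated hypothesis, and the right-hand side is $o\left(\int u_n^2+v_n^2\,dx\right)$ as $t\to\infty$. I expect the main obstacle to be the bookkeeping in the middle block: one must check that the Hölder exponents are admissible (their reciprocals sum to at most $1$) in each of the three cases $a\in\{0,1,2\}$, and that the resulting power of $\int u_{n-1}^2+v_{n-1}^2\,dx$ comes out to exactly $(d-2)/2$ rather than something larger or smaller. A clean way to organize this is to treat $u$- and $v$-factors symmetrically and reduce, by the inequality $x^{\alpha}y^{\beta}\le(x^2+y^2)^{(\alpha+\beta)/2}$ applied pointwise or after integration, to counting only the total degrees $a$, $b$, $d$; then the verification collapses to the three numerical cases above.
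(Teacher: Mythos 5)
Your overall architecture is the same as the paper's: set $z_m:=\sqrt{u_m^2+v_m^2}$ (or treat $u$ and $v$ symmetrically), push every factor of order $\le n-2$ into $L^\infty$ and back down to $\norm{z_{n-1}}_2$ via Poincaré--Wirtinger, and do a case analysis on the top two orders governed by $2(\alpha_n+\beta_n)+\alpha_{n-1}+\beta_{n-1}\le 4$; the final $o(\cdot)$ deduction is also correct. But there is a genuine gap in the two hardest cases. In your notation $a=\alpha_n+\beta_n$, $b=\alpha_{n-1}+\beta_{n-1}$: when $a=1$ and $b=2$ you place both order-$(n-1)$ factors in $L^4$ and bound $\norm{u_{n-1}}_4\le\norm{u_n}_1\le\norm{u_n}_2$, so the top block yields $\left(\int u_n^2+v_n^2\,dx\right)^{3/2}$ instead of $\left(\int u_n^2+v_n^2\,dx\right)\left(\int u_{n-1}^2+v_{n-1}^2\,dx\right)^{1/2}$. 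Since $\norm{u_{n-1}}_2\le\norm{u_n}_2$ and not conversely, this is strictly weaker than the claimed inequality, and it breaks the second assertion: after dividing by $\int u_n^2+v_n^2\,dx$ you are left with a factor $\left(\int u_n^2+v_n^2\,dx\right)^{1/2}$, which is not known to tend to zero --- its decay is precisely what the lemma is being used to establish at step $n$ of the induction. The paper handles $\gam_n+\gam_{n-1}=3$ by splitting the two order-$(n-1)$ factors asymmetrically, one in $L^\infty$ (costing $\norm{z_n}_2$) and one in $L^2$ (giving $\norm{z_{n-1}}_2$), with Hölder exponents $2,\infty,2$. Equivalently you could interpolate $\norm{u_{n-1}}_4^2\le\norm{u_{n-1}}_\infty\norm{u_{n-1}}_2\le\norm{u_n}_2\norm{u_{n-1}}_2$, but your chain discards the needed $\norm{u_{n-1}}_2$.

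The case $a=0$ has the same defect: for $b=3$ or $4$ you propose to bound the order-$(n-1)$ factors ``in the appropriate $L^p$ by $\norm{u_{n-1}}_2$ using \eqref{23}'', but \eqref{23} runs the wrong way for $p>2$. The correct move, as in the paper's cases $\gam_n+\gam_{n-1}=3,4$, is to send exactly the excess factors (at most two, by the hypothesis on the exponents) through $L^\infty\le\norm{z_n}_2$ and keep two in $L^2$; your remark about ``peeling off two to match'' identifies the right count but is not backed by an inequality that actually produces the factor $\norm{z_n}_2^2\norm{z_{n-1}}_2^{d-2}$.
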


\begin{proof}
Setting
\begin{equation*}
z_m:=\sqrt{u_m^2+v_m^2}\quad \text{and}\quad \gam_m:=\alpha_m+\beta_m,\quad m=0,\ldots, n
\end{equation*}
we have
\begin{equation*}
\abs{\int\prod_{m=0}^nu_m^{\alpha_m}v_m^{\beta_m}\ dx}\le \int\prod_{m=0}^nz_m^{\gam_m}\ dx.
\end{equation*}
We are going to majorize the right side by using the H\"older and Poincaré--Wirtinger inequalities \eqref{23}--\eqref{24}. We distinguish five cases according to the value of $\gam_n+\gam_{n-1}$: since $2\gam_n+\gam_{n-1}\le 4$ by our assumption, $\gam_n+\gam_{n-1}\le 4$.

If $\gam_n+\gam_{n-1}=0$, then we have
\begin{equation*}
\abs{\int\prod_{m=0}^nz_m^{\gam_m}\ dx}
\le \prod_{m=0}^{n-2}\norm{z_m}_{\infty}^{\gam_m}
\le \norm{z_n}_2^2\norm{z_{n-1}}_2^{d-2}.
\end{equation*}

If $\gam_n+\gam_{n-1}=1$, then
\begin{equation*}
\abs{\int\prod_{m=0}^nz_m^{\gam_m}\ dx}
\le \norm{z_n}_1 \prod_{m=0}^{n-2}\norm{z_m}_{\infty}^{\gam_m}
\le \norm{z_n}_2^2\norm{z_{n-1}}_2^{d-2}.
\end{equation*}

If $\gam_n+\gam_{n-1}=2$, then
\begin{equation*}
\abs{\int\prod_{m=0}^nz_m^{\gam_m}\ dx}
\le \norm{z_n}_2^2\prod_{m=0}^{n-2}\norm{z_m}_{\infty}^{\gam_m}
\le \norm{z_n}_2^2\norm{z_{n-1}}_2^{d-2}.
\end{equation*}

If $\gam_n+\gam_{n-1}=3$, then we have necessarily $\gam_n=1$ and $\gam_{n-1}=2$, so that
\begin{equation*}
\abs{\int\prod_{m=0}^nz_m^{\gam_m}\ dx}
\le \norm{z_n}_2\norm{z_{n-1}}_{\infty}\norm{z_{n-1}}_2\prod_{m=0}^{n-2}\norm{z_m}_{\infty}^{\gam_m}
\le \norm{z_n}_2^2\norm{z_{n-1}}_2^{d-2}.
\end{equation*}

Finally, if $\gam_n+\gam_{n-1}=4$, then we have necessarily $\gam_n=0$ and $\gam_{n-1}=4$, so that
\begin{equation*}
\abs{\int\prod_{m=0}^nz_m^{\gam_m}\ dx}
\le \norm{z_{n-1}}_{\infty}^2\norm{z_{n-1}}_2^2\prod_{m=0}^{n-2}\norm{z_m}_{\infty}^{\gam_m}
\le \norm{z_n}_2^2\norm{z_{n-1}}_2^{d-2}.\qedhere
\end{equation*}
\end{proof}

\section{Proof of Proposition \ref{p22} for $n=0$}\label{s3}


Our proof is based on the following identity:

\begin{lemma}\label{l31}
The solutions of \eqref{21} satisfy the following identity for all $n=0,1,\ldots :$
\begin{align}\label{31}
\left( \int u_n^2+v_n^2\ dx\right) '
&=-2k\int u_n^2+v_n^2\ dx\\
&\qquad\qquad -2\int u_n(u_1u)_n+v_n(v_1v)_n\ dx\notag \\
&\qquad\qquad -2a_1\int u_n(vv_1)_n+v_n(uv)_{n+1}\ dx\notag \\
&\qquad\qquad -2a_2\int v_n(uu_1)_n+u_n(uv)_{n+1}\ dx.\notag
\end{align}
\end{lemma}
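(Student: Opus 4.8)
The plan is to differentiate $\int u_n^2 + v_n^2\,dx$ in $t$ and substitute the PDE for $u'$ and $v'$ from \eqref{21}. First I would differentiate the evolution equations $n$ times in $x$: since differentiation in $x$ commutes with the periodic boundary conditions and with $\partial_t$, applying $\partial_x^n$ to the first equation of \eqref{21} gives
\begin{equation*}
u_n' + \left((u+M)u_1\right)_n + u_{n+3} + a_3 v_{n+3} + a_1\left((v+N)v_1\right)_n + a_2\left((u+M)(v+N)\right)_{n+1} + k u_n = 0,
\end{equation*}
and similarly for $v_n'$. Then $\left(\int u_n^2+v_n^2\,dx\right)' = 2\int u_n u_n' + v_n v_n'\,dx$, and I would plug in the above expressions for $u_n'$ and $v_n'$.

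The bulk of the work is then to show that several of the resulting terms vanish by integration by parts (using the periodic rules $\int f_x g\,dx = -\int f g_x\,dx$ and $\int f^n f_x\,dx = 0$ recalled in Section 2). Specifically: the dispersive terms $-2\int u_n u_{n+3} + v_n v_{n+3}\,dx = 0$ since $u_n u_{n+3} = (u_n u_{n+2})_x - (u_{n+1}^2/2)_x$ integrates to zero, and likewise for $v$. The cross-dispersive terms combine as $-2a_3\int u_n v_{n+3} + v_n u_{n+3}\,dx = -2a_3\int (u_n v_{n+2} + v_n u_{n+2})_x - 2u_{n+1}v_{n+1}\cdot\tfrac{?}{}$ — more carefully, $u_n v_{n+3} + v_n u_{n+3} = (u_n v_{n+2})_x - u_{n+1}v_{n+2} + (v_n u_{n+2})_x - v_{n+1}u_{n+2} = (u_nv_{n+2} + v_nu_{n+2} - u_{n+1}v_{n+1})_x$, which integrates to zero. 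The constant $M$ in $\left((u+M)u_1\right)_n$: the extra piece is $M(u_1)_n = M u_{n+1}$, and $-2\int u_n M u_{n+1}\,dx = -M\int (u_n^2)_x\,dx = 0$; similarly the $N v_{n+1}$ piece drops, and the $M,N$ contributions inside $a_2\left((u+M)(v+N)\right)_{n+1}$ reduce: the $MN$ constant contributes $MN\,u_{n+1}$-type terms which integrate to zero, while $Mv + Nu$ pieces need to be folded into the surviving nonlinear terms. After collecting, the surviving terms should be exactly $-2k\int u_n^2+v_n^2\,dx$ together with the three nonlinear integrals written with $u,v$ rather than $u+M, v+N$; here the point is that replacing $u+M$ by $u$ inside $(u u_1)_n$, $(vv_1)_n$, $(uu_1)_n$, $(uv)_{n+1}$ only changes things by terms already shown to integrate to zero (since $M,N$ are constants and every such term is a perfect $x$-derivative after pairing against $u_n$ or $v_n$).

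The main obstacle I expect is the careful bookkeeping of the linear coupling term $a_3 v_{n+3}$ paired against $u_n$ (and symmetrically) and of the $M$- and $N$-dependent lower-order terms: one must verify that the mixed dispersive contribution $-2a_3\int u_n v_{n+3} + v_n u_{n+3}\,dx$ is an exact derivative and hence vanishes, and that none of the constant-mean-value corrections leaves a residual term outside the three nonlinear integrals displayed in \eqref{31}. This is purely a matter of systematic integration by parts with no analytic subtlety, but it is error-prone; the remark preceding Lemma \ref{l31} in the erratum note ("the terms $Mu_x$ and $Nv_x$ were missing") signals exactly where the delicate accounting lies. Once all the perfect-derivative terms are discarded, \eqref{31} follows directly.
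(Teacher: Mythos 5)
Your plan coincides with the paper's own proof: differentiate under the integral, substitute the $n$-th $x$-derivative of \eqref{21}, and eliminate the dispersive, cross-dispersive and mean-value contributions by integration by parts; your exact-derivative identities for $u_nu_{n+3}$ and for $u_nv_{n+3}+v_nu_{n+3}$ are correct, and the $M$-, $N$-terms do all cancel once the cross terms $u_nv_{n+1}$ and $v_nu_{n+1}$ arising from the \emph{two different} equations are grouped into $(u_nv_n)_1$ (they are not perfect derivatives individually, which is the one point to state carefully). This is essentially the same argument as in the paper.
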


\begin{proof}
We have
\begin{align*}
\left( \int u_n^2+v_n^2\ dx\right) '
&=\int 2u_nu_n'+2v_nv_n'\ dx\\
&=\int -2u_n((u+M)u_1+u_3+a_3v_3+a_1(v+N)v_1\\
&\hspace*{4cm}+a_2((u+M)(v+N))_1+ku)_n\ dx\\
&\qquad +\int -2v_n((v+N)v_1+v_3+a_3u_3+a_2(u+M)u_1\\
&\hspace*{4cm} +a_1((u+M)(v+N))_1+kv)_n\ dx.
\end{align*}
This yields the stated identity because
\begin{align*}
\int -2u_nu_{n+3}-2v_nv_{n+3}\ dx&=\int 2u_{n+1}u_{n+2}+2v_{n+1}v_{n+2}\ dx\\
&=\int (u_{n+1}^2)_1+(v_{n+1}^2)_1\ dx=0,
\end{align*}
\begin{equation*}
a_3\int -2u_nv_{n+3}-2v_nu_{n+3}\ dx=a_3\int -2u_nv_{n+3}+2v_{n+3}u_n\ dx=0,
\end{equation*}
\begin{multline*}
-2M\int u_nu_{n+1}+a_2u_nv_{n+1}+a_2v_nu_{n+1}+a_1v_nv_{n+1}\ dx\\
=-M\int\left( u_n^2+2a_2u_nv_n+a_1v_n^2\right) _1\ dx=0,
\end{multline*}
\begin{multline*}
-2N\int a_1u_nv_{n+1}+a_2u_nu_{n+1}+v_nv_{n+1}+a_1v_nu_{n+1}\ dx\\
=-N\int\left( 2a_1u_nv_n+a_2u_n^2+v_n^2\right) _1\ dx
=0
\end{multline*}
and $(MN)_1=0$.
\end{proof}

\begin{proof}[Proof of the proposition for $n=0$]
In this case the last three integrals of the identity \eqref{31} vanish because
\begin{align*}
&\int uu_1u+vv_1v\ dx=\frac{1}{3}\int (u^3+v^3)_1\ dx=0,\\
&\int uvv_1+v(uv)_1\ dx=\int (uvv)_1\ dx=0
\intertext{and}
&\int vuu_1+u(uv)_1\ dx=\int (vuu)_1\ dx=0.\qedhere
\end{align*}
\end{proof}

Proceeding by induction on $n$, let $n\ge 1$ and assume that the estimates
\begin{equation}\label{32}
\int u_m^2+v_m^2 \ dx=o\left( e^{-2k't}\right)\quad\text{as}\quad t\to\infty
\end{equation}
hold for all integers $m=0,\ldots,n-1$ and for all  $k'<k$. For $n=1$ this follows from the stronger identity \eqref{22}.

\section{Proof of Proposition \ref{p22} for $n=1$}\label{s4}

For the proof of the case $n=1$ we shall use an identity suggested by a conservation law discovered by Bona et al. \cite{BonPonSauTom1992}.

\begin{lemma}\label{l41}
Setting
\begin{equation*}
f:=\int u_1^2+v_1^2+2a_3u_1v_1\ dx
\end{equation*}
and
\begin{equation*}
g:=-\frac{1}{3}\int (u^3+v^3)+3(a_1uv^2+a_2u^2v)\ dx,
\end{equation*}
we have the following identity:
\begin{equation}\label{41}
(f+g)'=-2kf-3kg.
\end{equation}
\end{lemma}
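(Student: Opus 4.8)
The plan is to differentiate $f$ and $g$ separately along smooth solutions of \eqref{21}, collect all terms, and check that the cubic (nonlinear) contributions conspire to produce exactly the combination $-3kg$ while the quadratic contributions give $-2kf$. For $f'$ I would start from Lemma \ref{l31} with $n=1$, which already isolates the dissipative term $-2k\int u_1^2+v_1^2\ dx$ and three nonlinear integrals; the missing ingredient is the cross term $\left(2a_3\int u_1v_1\ dx\right)'$, which I would compute directly by substituting $u_1',v_1'$ from \eqref{21} (differentiated once in $x$) and integrating by parts. The third-order terms $u_{xxx},v_{xxx}$ and $a_3$-terms should cancel as in Lemma \ref{l31}'s proof (after using $a_3^2<1$ only implicitly—actually here one just needs the skew-symmetry), leaving $-2a_3k\int u_1v_1\ dx$ from the damping plus a collection of cubic integrals. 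So $f' = -2kf + (\text{cubic terms})$.

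Next I would compute $g'$. Since $g$ is cubic in $u,v$ with no derivatives beyond order zero, $g'$ brings in one factor of $u'$ or $v'$, each of which (via \eqref{21}) contributes: a damping term $ku$ or $kv$, a transport/nonlinear term, and the dispersive terms $u_{xxx},v_{xxx}$. The damping part of $g'$ should give exactly $-3kg$ (each of the three homogeneous-degree-$3$ monomials in $g$ picks up a factor $-k$ times its degree, i.e. $-3k$ times that monomial—one checks $\frac{d}{dt}\int u^3 = -3k\int u^3 + \cdots$, etc.). The dispersive terms in $g'$, after integration by parts, should produce precisely the negatives of the cubic terms left over from $f'$; this is the heart of the matter and is exactly what the choice of $g$ (coming from the Bona--Ponce--Saut--Tom conservation law) is designed to achieve. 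The transport terms $Mu_x$, $Nv_x$ etc. in $g'$ should also integrate to zero or combine away, using $[u]=[v]=0$ and the product rule identities $\int f^nf_x\ dx=0$.

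The one genuine obstacle is the bookkeeping of the cubic terms: there are many of them (from $uu_x$, $a_1vv_x$, $a_2(uv)_x$ in the first equation, symmetrically in the second, each multiplied against $u_1$ or $v_1$ and differentiated, plus the dispersive $\times$ cubic cross terms from $g'$), and one must verify term by term—using repeated integration by parts and the relation $a_1+a_2 = a_1^2+a_2^2$ from \eqref{14}—that everything not of the form $-2kf-3kg$ cancels. I would organize this by grouping terms according to their monomial type ($u^2u_2$-type, $uvv_2$-type, $u_1^2u$-type, and so on) and matching the $f'$-leftovers against the $g'$-dispersive part within each group. The algebraic constraint $a_1^2+a_2^2=a_1+a_2$ is presumably what makes a particular group vanish, so I would keep an eye out for where a coefficient like $a_1^2+a_2^2-a_1-a_2$ naturally appears. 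Once all cubic terms are shown to cancel, \eqref{41} follows immediately.
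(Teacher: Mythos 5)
Your outline follows the same route as the paper: take Lemma \ref{l31} with $n=1$ for $\left(\int u_1^2+v_1^2\,dx\right)'$, compute $\left(\int u_1v_1\,dx\right)'$ directly for the $a_3$ cross term, differentiate the cubic functional $g$, and check that the damping contributions give $-2kf-3kg$ while everything else cancels. The structural observations (the degree-$3$ monomials each pick up a factor $-3k$; the dispersive terms in $g'$ are designed to kill the cubic leftovers of $f'$) are all correct. But the lemma \emph{is} the cancellation, and you have not performed it: every cubic term is accompanied by ``should cancel'' or ``one must verify term by term.'' The paper's proof consists precisely of the four identities \eqref{42}--\eqref{45}, each requiring several integrations by parts (e.g.\ $\int u_1(vv_1)_1+v_1(uv)_2\,dx=\tfrac32\int u_1v_1^2\,dx$, $\int vv_1v_2+u_2(uv)_1\,dx=-\tfrac12\int 2v_2u_1u+3v_1u_1^2+v_1^3\,dx$), and then their linear combination. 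Without these computations there is no proof, only a statement of intent.

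There is also one concrete misprediction. You expect the cancellation to hinge on the constraint $a_1^2+a_2^2=a_1+a_2$ from \eqref{14} and plan to ``keep an eye out'' for a coefficient $a_1^2+a_2^2-a_1-a_2$. In fact the identity \eqref{41} holds for \emph{arbitrary} $a_1,a_2,a_3$: when you add $f'=-2kf-\int u_1^3+v_1^3-3a_1\int u_1v_1^2-3a_2\int u_1^2v_1+2a_3\int uu_1v_2+vv_1u_2-a_1a_3\int(\cdots)-a_2a_3\int(\cdots)$ to $g'$, each cubic integral reappears in $g'$ with exactly the opposite sign and the \emph{same} coefficient ($3a_1$, $3a_2$, $2a_3$, $a_1a_3$, $a_2a_3$), so the cancellation is a term-by-term pairing, not an algebraic identity among the coefficients. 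This is consistent with the Remark after Proposition \ref{p22}, which notes that the case $n=1$ needs only $\abs{a_3}<1$ (and that only to get $f\ge 0$ in the subsequent application of Lemma \ref{l23}, not for the identity itself). Organizing your bookkeeping around a hunt for $a_1^2+a_2^2-a_1-a_2$ would send you looking for a cancellation mechanism that is not there.
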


\begin{proof}
The equality \eqref{41} will follow by combining the following four identities:

\begin{align}\label{42}
\left( \int u_1^2+v_1^2\ dx\right) '
&=-2k\int u_1^2+v_1^2\ dx\\
&\qquad\qquad -\int u_1^3+v_1^3\ dx\notag \\
&\qquad\qquad  -3a_1\int u_1v_1^2\ dx\notag\\
&\qquad\qquad -3a_2\int u_1^2v_1\ dx;\notag
\end{align}

\begin{align}\label{43}
\left( \int u_1v_1\ dx\right) '
&=-2k\int u_1v_1\ dx+\int uu_1v_2+vv_1u_2\ dx\\
&\qquad\qquad -\frac{a_1}{2}\int 2v_2u_1u+3v_1u_1^2+v_1^3\ dx\notag \\
&\qquad\qquad-\frac{a_2}{2}\int 2u_2v_1v+3u_1v_1^2+u_1^3\ dx;\notag
\end{align}

\begin{align}\label{44}
\left( \int u^3+v^3\ dx\right) '
&=-3k\int u^3+v^3\ dx-3\int u_1^3+v_1^3\ dx\\
&\qquad\qquad -a_1\int 3u^2vv_1+2v^3u_1\ dx\notag \\
&\qquad\qquad -a_2\int 3v^2uu_1+2 u^3v_1\ dx.\notag \\
&\qquad\qquad +6a_3\int uu_1v_2+vv_1u_2\ dx;\notag
\end{align}

\begin{align}\label{45}
\left( \int a_1uv^2+a_2u^2v\ dx\right) '
&=-3k \int a_1uv^2+a_2u^2v\ dx\\
&\qquad\qquad +a_1\int \frac{2}{3}v^3u_1+u^2vv_1-3v_1^2u_1\ dx\notag\\
&\qquad\qquad +a_2\int \frac{2}{3}u^3v_1+v^2uu_1-3u_1^2v_1\ dx\notag\\
&\qquad\qquad -a_1a_3\int 2v_2u_1u+3v_1u_1^2+v_1^3\ dx\notag\\
&\qquad\qquad -a_2a_3\int 2u_2v_1v+3u_1v_1^2+u_1^3\ dx.\notag
\end{align}
\medskip

\emph{Proof of \eqref{42}.} We transform the identity \eqref{31} for $n=1$ as follows.
We have
\begin{align*}
\int u_1(u_1u)_1+v_1(v_1v)_1\ dx
&=\int u_2u_1u+u_1^3+v_2v_1v+v_1^3\ dx\\
&=\int u_1^3+v_1^3+\frac{1}{2}(u_1^2)_1u+\frac{1}{2}(v_1^2)_1v\ dx\\
&=\frac{1}{2}\int u_1^3+v_1^3\ dx,
\end{align*}
\begin{align*}
\int u_1(vv_1)_1+v_1(uv)_2\ dx
&=\int u_1v_1^2+u_1vv_2-v_2(uv)_1\ dx\\
&=\int u_1v_1^2-v_2uv_1\ dx\\
&=\int u_1v_1^2-\frac{1}{2}u(v_1^2)_1\ dx\\
&=\frac{3}{2}\int u_1v_1^2\ dx,
\end{align*}
and by symmetry
\begin{equation*}
\int v_1(uu_1)_1+u_1(uv)_2\ dx=\frac{3}{2}\int u_1^2v_1\ dx.
\end{equation*}
Using them \eqref{31} implies \eqref{42}.
\medskip

\emph{Proof of \eqref{43}.} We have
\begin{align*}
\left( \int u_1v_1\ dx\right) '
&=\int u_1'v_1+u_1v_1'\ dx\\
&=\int -(uu_1+u_3+a_3v_3+a_1vv_1+a_2(uv)_1+ku)_1v_1\ dx\\
&\qquad\qquad +\int -u_1(vv_1+v_3+a_3u_3+a_2uu_1+a_1(uv)_1+kv)_1\ dx\\
&=-2k\int u_1v_1\ dx+\int (uu_1+u_3)v_2+(vv_1+v_3)u_2\ dx\\
&\qquad\qquad -a_1\int (vv_1)_1v_1+u_1(uv)_2\ dx\\
&\qquad\qquad -a_2\int (uv)_2v_1+u_1(uu_1)_1\ dx\\
&\qquad\qquad -a_3\int v_4v_1+u_4u_1\ dx\\
&=-2k\int u_1v_1\ dx+\int uu_1v_2+vv_1u_2\ dx\\
&\qquad\qquad +a_1\int vv_1v_2+u_2(uv)_1\ dx\\
&\qquad\qquad +a_2\int (uv)_1v_2+u_2uu_1\ dx
\end{align*}
because
\begin{equation*}
\int u_3v_2+v_3u_2\ dx=\int u_3v_2-v_2u_3\ dx=0
\end{equation*}
and
\begin{equation*}
\int v_4v_1+u_4u_1\ dx=-\int v_3v_2+u_3u_2\ dx=-\frac{1}{2}\int (v_2^2+u_2^2)_1\ dx=0.
\end{equation*}

Since
\begin{align*}
\int vv_1v_2+u_2(uv)_1\ dx&=\int \frac{1}{2}v(v_1^2)_1+\frac{1}{2}(u_1^2)_1v+u_2uv_1\ dx\\
& =\int -\frac{1}{2}v_1^3-\frac{1}{2}u_1^2v_1-u_1^2v_1-u_1uv_2\ dx\\
& =-\frac{1}{2}\int 2v_2u_1u+3v_1u_1^2+v_1^3\ dx
\end{align*}
and by symmetry
\begin{equation*}
\int uu_1u_2+v_2(uv)_1\ dx=-\frac{1}{2}\int 2u_2v_1v+3u_1v_1^2+u_1^3\ dx,
\end{equation*}
\eqref{43} follows from the previous identity.
\medskip

\emph{Proof of \eqref{44}.} We have
\begin{align*}
\left( \int u^3\ dx\right) '
&=\int 3u^2u'\ dx\\
&=\int -3u^2(uu_1+u_3+a_3v_3+a_1vv_1+a_2(uv)_1+ku)\ dx\\
&= \int -\frac{3}{4}\left( u^4\right) _1+3u\left( u_1^2\right) _1-3ku^3\ dx
-3a_3\int u^2v_3\ dx\\
&\qquad\qquad -3a_1\int u^2vv_1\ dx-3a_2\int u^3v_1+\frac{1}{3}(u^3)_1v\ dx\\
&=-3\int u_1^3+ku^3\ dx-3a_1\int u^2vv_1\ dx-2a_2\int u^3v_1\ dx\\
&\qquad\qquad +6a_3\int uu_1v_2\ dx.
\end{align*}
We have an analogous identity for $\int v^3\ dx$ by symmetry; adding the we get \eqref{44}.
\medskip

\emph{Proof of \eqref{45}.} We have
\begin{align*}
\left( \int u^2v\ dx\right) '
&=\int u'(2uv)+u^2v'\ dx\\
&=\int -2uv(uu_1+u_3+a_3v_3+a_1vv_1+a_2(uv)_1+ku)\ dx\\
&\qquad\qquad +\int -u^2(vv_1+v_3+a_3u_3+a_2uu_1+a_1(uv)_1+kv)\ dx\\
&=\int -2u^2u_1v+2u_2(uv)_1-u^2vv_1+2v_2uu_1\ dx-3k\int u^2v\ dx\\
&\qquad\qquad -a_1\int 2uvvv_1+u^2(uv)_1\ dx\\
&\qquad\qquad -a_2\int 2uv(uv)_1+u^3u_1\ dx\\
&\qquad\qquad -a_3\int 2uvv_3+u^2u_3\ dx.
\end{align*}
Here
\begin{equation*}
\int -2u^2u_1v\ dx=-\frac{2}{3}\int (u^3)_1v\ dx=\frac{2}{3}\int u^3v_1,
\end{equation*}
\begin{equation*}
\int -u^2vv_1\ dx=-\frac{1}{2}\int u^2(v^2)_1\ dx=\frac{1}{2}\int (u^2)_1v^2\ dx=\int v^2uu_1\ dx,
\end{equation*}
\begin{align*}
\int 2u_2(uv)_1+2v_2uu_1\ dx
&=\int (2u_2u_1v+2u_2uv_1)-(2v_1u_1^2+2v_1uu_2)\ dx\\
&=\int (u_1^2)_1v-2v_1u_1^2\ dx\\
&=-3\int u_1^2v_1\ dx,
\end{align*}
\begin{equation*}
\int 2uvvv_1+u^2(uv)_1\ dx=\int \frac{2}{3}u(v^3)_1+u^3v_1+\frac{1}{3}(u^3)_1v\ dx=\frac{2}{3}\int u^3v_1-v^3u_1\ dx,
\end{equation*}
\begin{equation*}
\int 2uv(uv)_1+u^3u_1\ dx=\int \left( (uv)^2+\frac{1}{4}u^4\right) _1\ dx=0,
\end{equation*}
and
\begin{align*}
\int 2uvv_3+u^2u_3\ dx
&=\int -2(u_1v+uv_1)v_2-2uu_1u_2\ dx\\
&=\int 2(u_2v+u_1v_1)v_1-u(v_1^2)_1-u(u_1^2)_1\ dx\\
&=\int 2(u_2v+u_1v_1)v_1+u_1v_1^2+u_1^3\ dx\\
&=\int 2u_2v_1v+3u_1v_1^2+u_1^3\ dx,
\end{align*}
so that
\begin{multline*}
\left( \int u^2v\ dx\right) '=\int \frac{2}{3}u^3v_1+v^2uu_1-3u_1^2v_1\ dx-3k\int u^2v\ dx\\
- \frac{2}{3}a_1\int u^3v_1-v^3u_1\ dx-a_3\int 2u_2v_1v+3u_1v_1^2+u_1^3\ dx.
\end{multline*}

By symmetry, we also have
\begin{multline*}
\left( \int v^2u\ dx\right) '=\int \frac{2}{3}v^3u_1+u^2vv_1-3v_1^2u_1\ dx-3k\int v^2u\ dx\\
- \frac{2}{3}a_2\int v^3u_1-u^3v_1\ dx-a_3\int 2v_2u_1u+3v_1u_1^2+v_1^3\ dx.
\end{multline*}

Combining the last two identities  \eqref{45}  follows (some terms annihilate each other).
\end{proof}

\begin{proof}[Proof of the proposition for $n=1$]
It suffices to show that the functions $f$ and $g$ of Lemma \ref{l41} satisfy the conditions of Lemma \ref{l23}.
Since $\abs{a_3}<1$, we have $f\ge 0$. The other conditions follow from the already proven case $n=0$ and from the second part of Lemma \ref{l25}. We conclude by applying the lemma and then by observing that
\begin{equation*}
\int u_1^2+v_1^2\ dx\le \frac{1}{1-\abs{a_3}}\int u_1^2+v_1^2+2a_3u_1v_1\ dx.\qedhere
\end{equation*}
\end{proof}

\section{Proof of Proposition \ref{p22} for $n=2$}\label{s5}

\begin{lemma}\label{l51}
Setting
\begin{align*}
&f:=\int u_2^2+v_2^2+2a_3u_2v_2\ dx,\\
&g:=-\frac{5}{3}\int (u_1^2u+v_1^2v)+a_1(2u_1v_1v+v_1^2u)+a_2(2u_1v_1u+u_1^2v)\ dx
\intertext{and}
&h:=\frac{2}{3}a_3\int (1-a_1)(2u_3v_2u+u_2v_2u_1)+(1-a_2)(2v_3u_2v+u_2v_2v_1)\ dx,
\end{align*}
we have
\begin{equation}\label{51}
(f+g)'\approx -2kf+h.
\end{equation}
\end{lemma}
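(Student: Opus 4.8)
The strategy is exactly the one used for Lemma~\ref{l41}: we want to start from the general identity \eqref{31} with $n=2$, which expresses $\left(\int u_2^2+v_2^2\ dx\right)'$ in terms of $-2k\int u_2^2+v_2^2\ dx$ plus three groups of nonlinear correction terms, and then combine it with the $t$-derivatives of the lower-order ``correction'' functionals appearing in $g$ so that, after integration by parts, the leftover terms are either $-2kf$, or proportional to $g$ (hence $o(f)$ by the inductive hypothesis and Lemma~\ref{l25}), or collected into the explicit remainder $h$. So the first step is to write down the four (or five) auxiliary identities analogous to \eqref{42}--\eqref{45}: namely the evolution of $\int u_2^2+v_2^2\ dx$ (from \eqref{31} at $n=2$), of $\int u_2v_2\ dx$ (to supply the $2a_3$ cross term in $f$ via $2a_3\int u_2v_2\ dx$), and of the various cubic functionals $\int u_1^2u+v_1^2v\ dx$, $\int 2u_1v_1v+v_1^2u\ dx$, $\int 2u_1v_1u+u_1^2v\ dx$ that make up $g$. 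Each of these is obtained by differentiating under the integral sign, substituting the equations \eqref{21} for $u'$ and $v'$, and integrating by parts repeatedly, exactly as in Section~\ref{s4}.

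\textbf{Key bookkeeping.} The coefficient $-\tfrac53$ in front of $g$ and the precise combination inside $g$ and $h$ are forced: after expanding \eqref{31} at $n=2$ the ``dangerous'' terms are those of degree $3$ in which two derivatives fall on a single factor (e.g.\ $\int u_2^2 u_1$-type and $\int u_2 u_3 u$-type integrals), and one integrates by parts to lower the maximal derivative order wherever possible. Whatever cannot be written as $-2kf$ or reduced to a multiple of $g$ will be a term with exactly the derivative budget appearing in $h$ --- i.e.\ a cubic integral with derivative counts summing to $5$, with $a_3$ as a factor, and multiplied by $(1-a_1)$ or $(1-a_2)$; recall that by \eqref{14} either $a_3=0$ (so $h\equiv 0$) or $a_1=a_2=1$ (so again $h\equiv 0$), so $h$ is genuinely harmless, but at the level of the identity \eqref{51} it must be carried along explicitly because the cancellations that kill it use \eqref{14} and are done later. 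One then forms $(f+g)'$, uses $g\approx 0$ (from the case $n=1$, since $g$ is a sum of cubic integrals in $u,v,u_1,v_1$ with total derivative order $\le 2$, hence $o(\int u_1^2+v_1^2)= o(1)$, and in fact $o(f)$ via Lemma~\ref{l25}), and checks that every remaining term that is not $-2kf$ or part of $h$ is itself $\approx 0$. The factor $\tfrac{1}{1-\abs{a_3}}$ guarantees $f\ge 0$, as in the previous section.

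\textbf{Main obstacle.} The real work --- and the only place where an error is easy to make --- is the integration-by-parts algebra showing that \emph{all} the leftover degree-$3$ integrals organize themselves into $-2kf$, a multiple of $g$, and $h$, with no stray terms. In particular one must track carefully the terms where three derivatives act (coming from the $u_{xxx}, v_{xxx}$ and $a_3 v_{xxx}, a_3 u_{xxx}$ parts of \eqref{21} hitting the quadratic factors $2uv$, $u^2$, etc.\ inside the cubic functionals): these are what produce the $u_3$-- and $v_3$--terms in $h$, and they must match \emph{exactly}, since a $u_3$ or $v_3$ term that failed to cancel could not be controlled by Lemma~\ref{l25} (whose hypothesis $2(\alpha_n+\beta_n)+\alpha_{n-1}+\beta_{n-1}\le 4$ with $n=3$ would be violated by, say, $\int u_3^2 u$). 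I expect, though, that the same miraculous cancellations that worked in Lemma~\ref{l41} recur here, and the verification, while lengthy, is routine once the right auxiliary identities are written down. After \eqref{51} is established, the case $n=2$ of the proposition then follows just as $n=1$ did: apply Lemma~\ref{l23} with this $f$ and $g+(\text{antiderivative bookkeeping of }h)$ --- but in fact one more step is needed, namely to absorb $h$ itself, which will be handled in the subsequent part of Section~\ref{s5} using \eqref{14}.
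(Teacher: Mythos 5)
Your plan coincides with the paper's proof in every structural respect: the paper derives exactly the five auxiliary identities you name --- the $n=2$ case of \eqref{31} rewritten as \eqref{52}, the evolution \eqref{53} of $\int u_2v_2\,dx$ for the $2a_3$ cross term in $f$, and the relations \eqref{54}--\eqref{56} for the three cubic functionals composing $g$ --- and then combines them, discarding via Lemma \ref{l25} and the inductive hypothesis every cubic integral of total derivative order at most $4$, so that only $-2kf$ and the order-$5$ integrals collected in $h$ survive. Your observations that $g\approx 0$ follows from Lemma \ref{l25} with $d=3$ and the already-proved case $n=1$, that the dangerous terms are precisely those in which $u_3$ or $v_3$ appears (since Lemma \ref{l25} cannot absorb them), and that $h$ vanishes under \eqref{14} because either $a_3=0$ or $a_1=a_2=1$, are all exactly the points the paper makes.

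The gap is that you have not actually proved the lemma. Lemma \ref{l51} is a specific algebraic identity, and its entire content is the integration-by-parts computation that produces the coefficient $-\tfrac53$ in $g$, the factor $\tfrac23 a_3$ and the factors $(1-a_1)$, $(1-a_2)$ in $h$, and the precise monomials $2u_3v_2u+u_2v_2u_1$ and $2v_3u_2v+u_2v_2v_1$. You assert that these are ``forced'' and that the ``miraculous cancellations'' of Lemma \ref{l41} ``recur here,'' but you verify none of it; in particular you never check that the $a_3$-terms arising from \eqref{53} (which contribute $+a_3(u_3v_2u+v_3u_2v+\dots)$ through $2a_3(\int u_2v_2)'$) and those arising from \eqref{54}--\eqref{56} (where $a_3v_3$ and $a_3u_3$ from \eqref{21} hit the cubic functionals) combine into exactly the $(1-a_1)$- and $(1-a_2)$-weighted expression defining $h$, with no uncancelled third-derivative term left over. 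Since a single stray term such as $\int u_3v_2u\,dx$ without the compensating factor would make the statement false as written, the verification is not optional bookkeeping but the proof itself; as it stands the proposal establishes the shape of the identity but not the identity.
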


\begin{proof}
The relationship \eqref{51} will follow by combining the following relations:

\begin{align}\label{52}
\left( \int u_2^2+v_2^2\ dx\right) '
&=-2k\int u_2^2+v_2^2\ dx \\
&\qquad\qquad -5\int u_2^2u_1+v_2^2v_1\ dx\notag  \\
&\qquad\qquad -5a_1\int 2u_2v_2v_1+v_2^2u_1\ dx \notag \\
&\qquad\qquad -5a_2\int 2u_2v_2u_1+u_2^2v_1\ dx;\notag
\end{align}

\begin{align}\label{53}
&\left( \int u_2v_2\ dx\right)'= -2k\int u_2v_2\ dx\\
&\qquad\qquad -\int u_3v_2u+v_3u_2v+3u_2v_2(u_1+v_1)\ dx\notag\\
&\qquad\qquad -a_1\int \frac{5}{2}(u_2^2+v_2^2)v_1+2u_2v_2u_1-u_3v_2u \ dx\notag\\
&\qquad\qquad -a_2\int \frac{5}{2}(u_2^2+v_2^2)u_1+2u_2v_2v_1-v_3u_2v\ dx;\notag
\end{align}

\begin{align}\label{54}
\left( \int u_1^2u+v_1^2v\ dx\right) '
&\approx -3\int u_2^2u_1+v_2^2v_1\ dx \\
&\qquad\qquad -2a_3\int u_3v_2u+v_3u_2v+2u_2v_2(u_1+v_1)\ dx;\notag
\end{align}

\begin{align}\label{55}
\left( \int 2u_1v_1v+v_1^2u\ dx\right) '
&\approx -3\int 2u_2v_2v_1+v_2^2u_1\ dx \\
&\qquad +a_3\int -3(u_2^2+v_2^2)v_1+2u_3v_2u-2u_2v_2u_1\ dx;\notag
\end{align}

\begin{align}\label{56}
\left( \int 2u_1v_1u+u_1^2v\ dx\right) '
&\approx -3\int 2u_2v_2u_1+u_2^2v_1\ dx \\
&\qquad +a_3\int -3(u_2^2+v_2^2)u_1+2v_3u_2v-2u_2v_2v_1\ dx.\notag
\end{align}

\emph{Proof of \eqref{52}.} We transform the last three integrals of the identity \eqref{31} in the following way:
\begin{align*}
-2\int u_2(u_1u)_2+v_2(v_1v)_2\ dx
&=-2\int 3u_2^2u_1+u_2u_3u+3v_2^2v_1+v_2v_3v\ dx\\
&=-2\int 3u_2^2u_1+\frac{1}{2}(u_2^2)_1u+3v_2^2v_1+\frac{1}{2}(v_2^2)_1v\ dx\\
&=-5\int u_2^2u_1+v_2^2v_1\ dx,
\end{align*}
\begin{align*}
-2a_1\int u_2(vv_1)_2+v_2(uv)_3\ dx
&=-2a_1\int 3u_2v_1v_2+u_2vv_3-v_3(uv)_2\ dx\\
&=-2a_1\int 3u_2v_1v_2-2v_3u_1v_1-v_3uv_2\ dx\\
&=-2a_1\int 3u_2v_1v_2+2v_2(u_1v_1)_1-\frac{1}{2}u(v_2^2)_1\ dx\\
&=-2a_1\int 5u_2v_1v_2+\frac{5}{2}u_1v_2^2\ dx\\
&=-5a_1\int 2u_2v_2v_1+v_2^2u_1\ dx,
\end{align*}
and by symmetry
\begin{equation*}
-2a_2\int v_2(uu_1)_2+u_2(uv)_3\ dx=-5a_2\int 2u_2v_2u_1+u_2^2v_1\ dx.
\end{equation*}

Combining these identities with \eqref{31} we obtain \eqref{52}.
\medskip

\emph{Proof of \eqref{53}.} We have

\begin{align*}
\left( \int u_2v_2\ dx\right)'
&= \int u_2'v_2+u_2v_2'\ dx\\
&=-\int (u_1u+u_3+ku+a_3v_3+a_1v_1v+a_2(uv)_1)_2v_2\ dx\\
&\qquad\qquad -\int u_2(v_1v+v_3+kv+a_3u_3+a_2u_1u+a_1(uv)_1)_2 \ dx\\
&=-2k\int u_2v_2\ dx\\
&\qquad\qquad-a_3\int v_5v_2+u_2u_5\ dx-\int u_5v_2+u_2v_5\ dx\\
&\qquad\qquad -\int (uu_1)_2v_2+u_2(vv_1)_2\ dx\\
&\qquad\qquad -a_1\int (vv_1)_2v_2+u_2(uv)_3\ dx\\
&\qquad\qquad -a_2\int (uv)_3v_2+u_2(uu_1)_2\ dx.
\end{align*}
Here
\begin{equation*}
\int v_5v_2+u_2u_5\ dx=-\int v_4v_3+u_3u_4\ dx=-\frac{1}{2}\int(v_3^2+u_3^2)_1\ dx=0,
\end{equation*}
\begin{equation*}
\int u_5v_2+u_2v_5\ dx=\int u_5v_2-u_5v_2\ dx=0,
\end{equation*}
\begin{align*}
\int &(uu_1)_2v_2+u_2(vv_1)_2\ dx\\
&=\int 3u_1u_2v_2+uv_2u_3+vu_2v_3+3v_1v_2u_2\ dx,
\end{align*}
\begin{align*}
\int &(vv_1)_2v_2+u_2(uv)_3\ dx\\
&=\int 3v_2^2v_1+v_3v_2v+u_3u_2v+3u_2^2v_1+3u_2v_2u_1+v_3u_2u\ dx\\
&=\int 3v_2^2v_1+\frac{1}{2}(v_2^2)_1v+\frac{1}{2}(u_2^2)_1v+3u_2^2v_1+3u_2v_2u_1+v_3u_2u\ dx\\
&=\int \frac{5}{2}(u_2^2+v_2^2)v_1+3u_2v_2u_1+v_3u_2u\ dx\\
&=\int \frac{5}{2}(u_2^2+v_2^2)v_1+3u_2v_2u_1-v_2u_3u-v_2u_2u_1\ dx\\
&=\int \frac{5}{2}(u_2^2+v_2^2)v_1+2u_2v_2u_1-u_3v_2u \ dx.
\end{align*}
By symmetry, we also have
\begin{equation*}
\int (uu_1)_2u_2+v_2(uv)_3\ dx=\int \frac{5}{2}(u_2^2+v_2^2)u_1+2u_2v_2v_1-v_3u_2v\ dx.
\end{equation*}
This proves \eqref{53}.
\medskip

Henceforth in all computations we integrate by parts and we apply Lemma \ref{l25} several times.

\emph{Proof of \eqref{54}.} We have
\begin{align*}
\left( \int u_1^2u\ dx\right)'
&=\int 2u_1u_1'u+u_1^2u'\ dx\\
&=\int -u'(2u_2u+u_1^2)\ dx\\
&=\int (2u_2u+u_1^2)(u_1u+u_3+ku+a_1v_1v+a_2(uv)_1+a_3v_3)\ dx\\
&=k\int 2u_2u^2+u_1^2u\ dx\\
&\qquad\qquad +\int u_1u(2u_2u+u_1^2)\ dx\\
&\qquad\qquad +\int u_3(2u_2u+u_1^2)\ dx\\
&\qquad\qquad +a_1\int v_1v(2u_2u+u_1^2)\ dx\\
&\qquad\qquad +a_2\int (uv)_1(2u_2u+u_1^2)\ dx\\
&\qquad\qquad +a_3\int v_3(2u_2u+u_1^2)\ dx.
\end{align*}
Here all integrals are equivalent to zero by Lemma \ref{l25}, except those containing $u_3$ or $v_3$. Since
\begin{equation*}
\int u_3(2u_2u+u_1^2)\ dx=\int (u_2^2)_1u+u_3u_1^2\ dx=-\int u_2^2u_1+2u_2^2u_1\ dx=-3\int u_2^2u_1\ dx
\end{equation*}
and
\begin{align*}
\int v_3(2u_2u+u_1^2)\ dx
&=2\int v_3u_2u-v_2u_2u_1\ dx\\
&=2\int -v_2u_3u-v_2u_2u_1-v_2u_2u_1\ dx\\
&=-2\int u_3v_2u+2u_2v_2u_1\ dx,
\end{align*}
we conclude that
\begin{equation*}
\left( \int u_1^2u\ dx\right)'\approx -3\int u_2^2u_1\ dx-2a_3\int u_3v_2u+2u_2v_2u_1\ dx.
\end{equation*}

Adding this to the analogous relationship for $\int v_1^2v\ dx$ we get \eqref{54}.
\medskip

\emph{Proof of \eqref{55} and \eqref{56}.} We have
\begin{align*}
\left( \int u_1v_1v\ dx\right)'
&=\int u_1'v_1v+u_1v_1'v+u_1v_1v'\ dx\\
&=\int -u'(v_2v+v_1^2)-v'u_2v\ dx\\
&=\int (v_2v+v_1^2)(u_1u+u_3+ku+a_1v_1v+a_2(uv)_1+a_3v_3)\ dx\\
&\qquad\qquad +\int u_2v(v_1v+v_3+kv+a_2u_1u+a_1(uv)_1+a_3u_3)\ dx\\
&\approx \int v_2vu_3+v_1^2u_3+u_2vv_3\ dx+a_3\int (v_2v+v_1^2)v_3+u_2vu_3 \ dx \\
&=\int (u_2v_2)_1v-u_2(v_1^2)_1\ dx+a_3\int (v_2v+v_1^2)v_3+u_2vu_3 \ dx \\
&= -3\int u_2v_2v_1\ dx+a_3\int (v_2v+v_1^2)v_3+u_2vu_3 \ dx.
\end{align*}
Since
\begin{align*}
\int (v_2v+v_1^2)v_3+u_2vu_3 \ dx
&=\int \frac{1}{2}(v_2^2)_1v-2v_2^2v_1+\frac{1}{2}v(u_2^2)_1\ dx\\
&=\int -\frac{1}{2}v_2^2v_1-2v_2^2v_1-\frac{1}{2}u_2^2v_1\ dx\\
&=\int -\frac{5}{2}v_2^2v_1-\frac{1}{2}u_2^2v_1\ dx,
\end{align*}
it follows that
\begin{equation*}
\left( \int 2u_1v_1v\ dx\right)'\approx -6\int u_2v_2v_1\ dx-a_3\int (5v_2^2+u_2^2)v_1\ dx,
\end{equation*}
and then by symmetry
\begin{equation*}
\left( \int 2u_1v_1u\ dx\right)'\approx -6\int u_2v_2u_1\ dx-a_3\int (5u_2^2+v_2^2)u_1\ dx.
\end{equation*}

Next we have
\begin{align*}
\left( \int u_1^2v\ dx\right)'
&=\int 2u_1u_1'v+u_1^2v'\ dx\\
&=\int -(2u_2v+2u_1v_1)u'+u_1^2v'\ dx\\
&=\int (2u_2v+2u_1v_1)(u_1u+u_3+ku+a_1v_1v+a_2(uv)_1+a_3v_3)\ dx\\
&\qquad\qquad +\int -u_1^2(v_1v+v_3+kv+a_2u_1u+a_1(uv)_1+a_3u_3)\ dx\\
&\approx \int 2u_3u_2v+2u_1v_1u_3-u_1^2v_3\ dx\\
&\qquad\qquad +a_3\int (2u_2v+2u_1v_1)v_3-u_1^2u_3 \ dx\\
&=\int -u_2^2v_1-2u_2(u_1v_1)_1+2u_1u_2v_2\ dx\\
&\qquad\qquad +a_3\int (2u_2v+2u_1v_1)v_3-u_1^2u_3 \ dx\\
&=-3\int u_2^2v_1\ dx+a_3\int (2u_2v+2u_1v_1)v_3-u_1^2u_3 \ dx.
\end{align*}
Since
\begin{align*}
\int (2u_2v+2u_1v_1)v_3-u_1^2u_3 \ dx
&=\int -2v_2(u_3v+2u_2v_1+u_1v_2)+2u_2^2u_1\ dx\\
&=\int -2u_3v_2v-4u_2v_2v_1-2v_2^2u_1+2u_2^2u_1\ dx\\
&=2\int v_3u_2v-u_2v_2v_1+(u_2^2-v_2^2)u_1\ dx,
\end{align*}
it follows that
\begin{equation*}
\left( \int u_1^2v\ dx\right)'=-3\int u_2^2v_1\ dx+2a_3\int v_3u_2v-u_2v_2v_1+(u_2^2-v_2^2)u_1\ dx,
\end{equation*}
and then by symmetry
\begin{equation*}
\left( \int v_1^2u\ dx\right)'=-3\int v_2^2u_1\ dx+2a_3\int u_3v_2u-u_2v_2u_1+(v_2^2-u_2^2)v_1\ dx.
\end{equation*}
Combining the four relations we get \eqref{55} and \eqref{56}.
\end{proof}

\begin{proof}[Proof of the proposition for $n=2$]
We consider the functions $f, g, h$ of Lemma \ref{l51}.
If $a_3=0$ or if $a_1=a_2=1$, then $h=0$. If $\abs{a_3}<1$, then
\begin{equation*}
\int u_n^2+v_n^2\ dx\le \frac{1}{1-\abs{a_3}}\int u_n^2+v_n^2+2a_3u_nv_n\ dx.
\end{equation*}
Since by Lemma \ref{l25} and the induction hypothesis $f$ and $g$ satisfy the assumptions of Lemma \ref{l23}, we may conclude as in case $n=1$ above.
\end{proof}

\section{Proof of the proposition for $n\ge 3$}\label{s6}

We proceed by induction on $n$, so we assume that the proposition holds for smaller values of $n$.

By Lemma \ref{l31} we have
\begin{align}\label{61}
\left( \int u_n^2+v_n^2\ dx\right) '
&=-2k\int u_n^2+v_n^2\ dx\\
&\qquad\qquad -2\int u_n(u_1u)_n+v_n(v_1v)_n\ dx\notag \\
&\qquad\qquad -2a_1\int u_n(vv_1)_n+v_n(uv)_{n+1}\ dx\notag \\
&\qquad\qquad -2a_2\int v_n(uu_1)_n+u_n(uv)_{n+1}\ dx.\notag
\end{align}
If we differentiate the products in the last three integrals by using Leibniz's rule and the binomial formula, we obtain a sum of three-term products. Using the inequality $n\ge 3$, it follows from Lemma \ref{l25} that all terms are equivalent to zero, except those containing the factor $u_{n+1}$ or $v_{n+1}$.

Indeed, the orders of differentiation of the three factors are $n$, $j$ and $n+1-j$ with $1\le j\le n$. Since the sum $2n+1$ of the differentiations satisfies the inequality $2n+1<2n+(n-1)$, we have
\begin{equation*}
2(\alpha_n+\beta_n)+(\alpha_{n_1}+\beta_{n-1})\le 4,
\end{equation*}
and Lemma \ref{l25} applies.

Using again that $1\le n-2$, it follows  that
\begin{align*}
\int u_n(u_1u)_n+v_n(v_1v)_n\ dx
&\approx \int u_nu_{n+1}u+v_nv_{n+1}v\ dx\\
&= \frac{1}{2}\int (u_n^2)_1u+(v_n^2)_1v\ dx\\
&=-\frac{1}{2}\int u_n^2u_1+v_n^2v_1\ dx\\
&\approx 0,
\end{align*}

\begin{align*}
\int u_n(vv_1)_n+v_n(uv)_{n+1}\ dx
&\approx \int u_nvv_{n+1}+v_nu_{n+1}v+v_nuv_{n+1}\ dx\\
&=\int u_nvv_{n+1}-u_n(v_nv)_1+\frac{1}{2}u(v_n^2)_1\ dx\\
&=\int -u_nv_nv_1-\frac{1}{2}u_1v_n^2\ dx\\
&\approx 0,
\end{align*}
and by symmetry
\begin{equation*}
\int v_n(uu_1)_n+u_n(uv)_{n+1}\ dx\approx 0.
\end{equation*}
Using these relations we infer from \eqref{61} that
\begin{equation*}
\left( \int u_n^2+v_n^2\ dx\right) '\approx -2k\int u_n^2+v_n^2\ dx,
\end{equation*}
and we conclude as usual.
\medskip

\emph{Acknowledgement.} 
The first and third authors were supported by CNPq (Brazil). The second author was supported by PROEX-Capes (Brazil).
Part of this work was done  during the visit of the first author to the University of Strasbourg in November 2012, and the visit of the second author to the Federal University of Rio de Janeiro (UFRJ) in February--March 2012. The authors thank the host institutions for their warm hospitality.


\begin{thebibliography}{99}
\bibitem{BonPonSauTom1992}
J.L. Bona, G. Ponce, J.C. Saut, M.M. Tom,
\emph{A model system for strong interaction between internal solitary waves},
Comm. Math. Physics 143 (1992), 287--313.

\bibitem{BonSmi}
J. J. Bona and R. Smith,
\emph{The initial value problem for the Korteweg-de Vries equation}, Philos. Trans. Roy. Soc. London, A 278  (1975),
555--604.

\bibitem{Dav1}
M. Dávila,
\emph{On the unique continuation property for a coupled system of KdV equations,} PhD Thesis, Institute of Mathematics, Federal University of Rio de Janeiro, Brazil, 1994.

\bibitem{Dav4}
M. Dávila,
\emph{Um sistema acoplado de equações tipo Korteweg--de Vries,}
Proceedings of the $8th$ SEDIP, São João del-Rei, 1994, 40.

\bibitem{Dav2}
M. Dávila,
\emph{Existência e unicidade de soluções periódicas para um sistema acoplado de equações tipo Korteweg--de Vries,}
Vertentes 14 (2000),  110--119.

\bibitem{Dav3}
M. Dávila,
\emph{Estabilização de um sistema acoplado de equações tipo KdV}, Proceedings of the $45^\circ$ Seminário Brasileiro de Análise, Florianópolis, 1997, Vol. 1, 453--458.

\bibitem{DavCha2006}
M. Dávila, F. S. Chaves,
\emph{Infinite conservation laws for a system of Korteweg--de Vries type equations,}
Proceedings of DINCON 2006, Brazilian Conference on Dynamics, Control and Their Applications, May 22--26, 2006, Guaratinguetá, SP, Brazil (four pages).

\bibitem{GeaGri1984}
J. A. Gear, R. Grimshaw,
\emph{Weak and strong interactions between internal solitary waves,}
Stud. Appl. Math. 70 (1984), 235--258.

\bibitem{Kom1994}
V. Komornik,
{\em  Exact Controllability and Stabilization. The Multiplier Method},
Collection RMA, vol. 36, Masson--John Wiley, Paris--Chicester, 1994.

\bibitem{KomRusZha1991}
V. Komornik, D. L. Russell, B.-Y. Zhang,
{\em  Stabilisation de l'équation de Korteweg--de Vries},
C. R. Acad. Sci. Paris Sér. I Math. 312 (1991),
841--843.

\bibitem{LauRosZha2010} C. Laurent, L. Rosier, B.-Y. Zhang,  \emph{Control and stabilization of the Korteweg-de Vries equation on a periodic domain,} Comm. Partial Differential Equations 35 (2010),  707--744.

\bibitem{Miu1976} R. M. Miura, \emph{Korteweg-deVries equation: a survey of results,} SIAM Rev. 18 (1976),
412--459).

\bibitem{MiuGarKru1968} R. M. Miura, C. S. Gardner, M. D. Kruskal, \emph{Korteweg-de Vries equation and generalizations. II. Existence of conservation laws and constants of motion,} J. Mathematical Phys. 9 (1968), 1204--1209.

\bibitem{PazSou} A. P. Pazoto, G. R. Souza, \emph{Uniform stabilization of a nonlinear dispersive system},
Quart. Appl. Math., to appear.

\bibitem{RusZha1993}
D. L. Russell, B.-Y. Zhang,
\emph{Controllability and stabilizability of the third order dispersion equation on a periodic domain},
SIAM J. Control Optim. 31 (1993), 659--676.

\bibitem{RusZha1996}
D. L. Russell, B.-Y. Zhang,
\emph{Exact controllability and stabilizability of the Korteweg-de Vries equation},
Trans. Amer. Math. Soc. 348 (1996), 3643--3672.

\bibitem{SauTem}
J. -C. Saut and R. Temam,
\emph{Remarks on the Korteweg-de Vries equations}, Israel J. Math. 24 (1976), 78--87.

\bibitem{Tem1969}
R. Temam,
\emph{Sur un problème non linéaire}, J. Math. Pures Appl. 48  (1969), 157--172.
\end{thebibliography}
\end{document}